        \newtheorem{lemma}{Lemma}[section]
        \newtheorem{theorem}[lemma]{Theorem}
         \newtheorem{corollary}[lemma]{Corollary}
        \newtheorem{definition}[lemma]{Definition}
\title{Finite Element Approximation of Transmission Eigenvalues for Anisotropic Media
\thanks{JS's work was partially supported by the National Science Foundation under Grant No. DMS-1521555. CZ's work was supported by Natural Science Foundation of Xinjiang Autonomous Region under No. 50500-600112096, and National Natural Science Foundation under No. 11771248. JS is partially supported by NSF DMS-1521555}}
\author{Bo Gong \thanks{Beijing Computational Science Research Center, Beijing 100193, China. ({\tt gongbo@csrc.ac.cn})} \and
Jiguang Sun \thanks{Department of
Mathematical Sciences, Michigan Technological University, Houghton, MI 49931, U.S.A. ({\tt  jiguangs@mtu.edu}).}
\and Tiara Turner 
\thanks{Department of Mathematics and Computer Science, University of Maryland Eastern Shore, Princess Anne, MD 21853, U.S.A. ({\tt tdturner@umes.edu}).}
\and Chunxiong Zheng \thanks{College of Mathematics and Systems Science, Xinjiang University, Urumqi 830046, China, and Department of Mathematical Sciences, Tsinghua University, Beijing, 100084, P.R.China ({\tt czheng@tsinghua.edu.cn}).}
}
\date{}
\begin{document}
\maketitle

\begin{abstract}
The transmission eigenvalue problem arises from the inverse scattering theory for inhomogeneous media and
has important applications in many qualitative methods. The problem is posted as a system of two second
order partial differential equations and is essentially nonlinear, non-selfadjoint, and of higher order.
It is nontrivial to develop effective numerical methods and the proof of convergence is challenging. In this paper, we formulate 
the transmission eigenvalue problem for anisotropic media as an eigenvalue problem of a holomorphic
Fredholm operator function of index zero. The Lagrange finite elements are used for discretization and the convergence is proved
using the abstract approximation theory for holomorphic operator functions. A spectral indicator method is
developed to compute the eigenvalues. Numerical examples are presented for validation.
\end{abstract}

\section{Introduction}
The transmission eigenvalue problem arises from the inverse scattering theory for inhomogeneous media and
has important applications in many qualitative methods \cite{CakoniETAL2010IP, CakoniHaddar2013}. 
It was shown that the transmission eigenvalues can be reconstructed
from the scattering data and used to obtain physical properties of the unknown target. There is a practical need 
to compute the transmission eigenvalues effectively and efficiently. Furthermore, the problem
is nonlinear and non-selfadjoint. It is worthwhile to study such problems from the numerical analysis point of view.

Numerical approximations for transmission eigenvalues have 
been an active research topic since the first paper by Colton, Monk and Sun \cite{ColtonMonkSun2010}. 
Many methods have been proposed including the
conforming finite element methods \cite{ColtonMonkSun2010, Sun2011SIAMNA,  CakoniMonkSun2014CMAM},
the mixed finite element methods \cite{ColtonMonkSun2010, Ji2012,  Yang2016, ChenH2017, Camano2018},
the non-conforming finite element methods \cite{YangHanBi2016CMAME},
the discontinuous Galerkin methods \cite{GengJiSunXu2016JSC, YangBiLiHan2017JCAM}, 
the virtual element method \cite{MoraVelasquez2018M3AS}, 
the spectral element methods \cite{AnShen2013JSC, An2016ANM}, 
the collocation method using the fundamental solutions \cite{Kleefeld2018IP, Kleefeld2018IPB}
and the boundary integral equation methods \cite{CossonniHaddar2013, Kleefeld2013IP, ZengSunXu2016SCM,CakoniKress2017}.
In addition, multilevel/multigrid methods and numerical linear algebra techniques have also been proposed 
\cite{JiSunXie2014JSC,LiEtal2014JSC,LiEtal2017IP, XieWu2017JSC}.

In this paper, we consider the finite element approximation of the transmission eigenvalue problem for anisotropic media.
Let $D \subset \mathbb R^2$ be a bounded Lipschitz domain.
Let $A(x)$ be a $2\times 2$ matrix valued function with $L^{\infty}(D)$ entries and $n(x) \in L^{\infty}(D)$. 
Assume that $n(x) > 0$ is bounded and  $A(x)$ is symmetric such that $  \xi \cdot \mbox{Im}(A) \xi \le 0$
and $  \xi \cdot \mbox{Re}(A) \xi \ge \gamma |\xi|^2$ for all $\xi \in \mathbb R^2$ with
$\gamma > 0$.
The transmission eigenvalue problem is to find $\eta \in \mathbb C$ and non-trivial functions $u, v$ such that
\begin{subequations}\label{ATE}
\begin{align}
\label{ATEw}&\nabla \cdot A \nabla u+\eta nu=0, &\text{in } D,\\[1mm]
\label{ATEv}&\Delta v+ \eta v=0, &\text{in } D,\\[1mm]
\label{ATEbcD}&u -v = 0, &\text{on } \partial D, \\[1mm]
\label{ATEbcN}& \partial_A u - \partial_\nu v= 0, &\text{on } \partial D,
\end{align}
\end{subequations}
where $\nu$ is the unit outward normal to $\partial D$ and $\partial_A u$ is the conormal derivative
\[
\partial_A u(x):= \nu({x}) \cdot A({x}) \nabla u({x}), \quad {x} \in \partial D.
\]


It is nontrivial to prove the convergence of the finite element methods for \eqref{ATE}
due to the nonlinearity. The classical spectral convergence theory for linear compact operators cannot 
be applied directly \cite{BabuskaOsborn1991, Boffi2010AN, SunZhou2016}. 
The existing convergence results only cover the isotropic media, i.e., $A=I$. In this case, 
the transmission eigenvalue problem can be reformulated as a non-linear 
fourth-order eigenvalue problem. Note that $\partial_A u = \partial_\nu u$ if $A=I$. 
Introducing $w=u-v$ and subtracting \eqref{ATEv} from \eqref{ATEw}, \eqref{ATE} can be written as a nonlinear fourth order eigenvalue problem
of finding $\eta$ and $w$ such that $w=\partial_\nu w = 0$ on $\partial D$ and 
\begin{equation}\label{TE4orderx}
\left[ (\triangle +\eta n(x))\frac{1}{n(x)-1}(\triangle + \eta)\right]w = 0 \quad \text{in } D.
\end{equation}
In \cite{Sun2011SIAMNA}, \eqref{TE4orderx} is recasted as the combination of a linear fourth order eigenvalue problem, 
which can be solved using a conforming finite element, and a nonlinear algebraic equation whose roots are transmission eigenvalues. 
In \cite{CakoniMonkSun2014CMAM}, the authors introduce a new variable and obtain a mixed formulation for \eqref{TE4orderx} 
consisting of one fourth order equation and one second order equation.
Then the convergence of a mixed finite element method is obtained using the perturbation theory for eigenvalues of nonselfadjoint
compact operators.

However, the above technique does not work for the anisotropic media since \eqref{TE4orderx} is not available if $A \ne I$. 
There exist a few numerical methods to compute transmission eigenvalues of anisotropic media \cite{JiSun2013JCP}.
Unfortunately, none of them provide a rigorous convergence proof.
In this paper, we reformulate \eqref{ATE} as an eigenvalue
problem of a holomorphic operator function. Then Lagrange finite elements and the spectral projection are used 
to compute the eigenvalues inside a region on the complex plane.  
Using the classic finite element theory  \cite{BrennerScott2008} and the
approximation results for the eigenvalues of holomorphic Fredholm operator functions \cite{Karma1996a, Karma1996b, Beyn2014}, 
we prove that the convergence of the finite element approximation.

The proposed method has several characteristics: 
1) the transmission eigenvalue problem of anisotropic media is reformulated as the eigenvalue problem of a holomorphic Fredholm operator function;
2) simple Lagrange finite elements can be used for discretization;
3) a rigorous convergence proof for transmission eigenvalue problem of anisotropic media is obtained for the first time to the authors' knowledge;
and 4) the method can be easily extended to the Maxwell's transmission eigenvalue problem and the elastic transmission
eigenvalue problem. 

The rest of the paper is arranged as follows. In Section 2, preliminaries of holomorphic Fredholm operator functions
and the abstract approximation theory for the eigenvalue problem are presented. In Section 3, we reformulate the
transmission eigenvalue problem as the eigenvalue problem of an operator function, which is holomorphic. Section 4 contains the Lagrange finite element 
discretization of the operator eigenvalue problem and its convergence proof. In Section 5, a spectral indicator method is 
designed to compute the eigenvalues in a region on the complex plane. Numerical results are presented in Section 6. 
Finally, in Section 7, we make some conclusions and discuss future work.

\section{Preliminaries}
We present the preliminaries of the approximation theory for eigenvalues of holomorphic Fredholm operator functions
\cite{Karma1996a, Karma1996b, Beyn2014}.
Let $X$ and $Y$ be complex Banach spaces. Denote by $\mathcal{L}(X, Y)$ the space of bounded linear operators from $X$ to $Y$.
Let  $\Omega \subseteq \mathbb C$ be a compact simply connected region. 
Let $T: \Omega \to \mathcal{L}(X, Y)$ be a holomorphic operator function on $\Omega$.
\begin{definition}
A bounded linear operator $T: X \to Y$ is said to be Fredholm if
\begin{itemize}
\item[1.] the subspace $\mathcal{R}(T)$, range of $T$, is closed in $Y$;
\item[2.] the subspace $\mathcal{N}(T)$, null space of $T$, and $Y/\mathcal{R}(T)$ are finite-dimensional.
\end{itemize}
The index of $T$ is the integer defined by
\[
\text{ind}(T) = \dim \mathcal{N}(T) - \dim (Y/\mathcal{R}(T)).
\]
\end{definition}

In the rest of the paper, we assume that $T(\eta)$ is a holomorphic operator function and, for each $\eta \in \Omega$, 
$T(\eta)$ is a Fredholm operator of index $0$. 
\begin{definition}A complex number $\lambda \in \Omega$ is called an eigenvalue
of $T$ if there exists a nontrivial $x \in X$ such that $T(\lambda) x = 0$. The element $x$ is called an eigenelement associated with $\lambda$.
\end{definition}

The resolvent set $\rho(T)$ and the spectrum $\sigma(T) $ of $T$ are defined as
\begin{equation}\label{rhoT}
\rho(T) = \{\eta \in \Omega: T(\eta)^{-1} \text{ exists and is bounded}\}
\end{equation}
and
\begin{equation}\label{sigmaT}
\sigma(T) = \Omega \setminus \rho(T),
\end{equation}
respectively. Since $T(\eta)$ is holomorphic, the spectrum $\sigma(T)$ has no cluster points in $\Omega$ and every $\lambda \in \sigma(T)$ is
an eigenvalue for $T(\eta)$. Furthermore,  the operator valued function
$T^{-1}(\cdot)$ is meromorphic (see Section 2.3 of \cite{Karma1996a}).
The dimension of $\mathcal{N}(T(\lambda))$ of an eigenvalue $\lambda$ is called the 
geometric multiplicity. 

\begin{definition}
An ordered sequence of elements $x_0, x_1, \ldots, x_k$ in $X$ is called a Jordan
chain of $T$ at an eigenvalue $\lambda$ if
\[
T(\lambda)x_j + \frac{1}{1!} T^{(1)}(\lambda)x_{j-1}+ \ldots + \frac{1}{j!} T^{(j)}(\lambda) x_0 = 0, \quad j = 0, 1, \ldots, k,
\]
where $T^{(j)}$ denotes the $j$th derivative. 
\end{definition}

The length of any Jordan chain of an eigenvalue is finite.
Denote by $m(T,\lambda, x_0)$ the length of a Jordan chain formed by an eigenelement $x_0$.
The maximal length of all Jordan chains of the eigenvalue $\lambda$ is denoted by $\kappa(T, \lambda)$.
Elements of any Jordan chain of an eigenvalue $\lambda$ are called generalized eigenelements of $\lambda$.
\begin{definition}
The closed linear hull of all generalized eigenelements of an eigenvalue $\lambda$, denoted by $G(\lambda)$,
is called the generalized eigenspace of $\lambda$.
\end{definition}

A basis $x_0^1, \ldots, x_0^J$ of the eigenspace of eigenvalue $\lambda$, i.e., $\mathcal{N}(T(\lambda))$, is called canonical if
\begin{itemize}
\item[(i)] $m(T, \lambda, x_0^1) = \kappa(T, \lambda)$
\item[(ii)] $x_0^j$ is an eigenelement of the maximal possible order belonging to some direct complement $M_j$
		in $\mathcal{N}(T(\lambda))$ to the linear hull $\text{span}\{x_0^1, \ldots, x_0^{j-1} \}$, i.e.,
		\[
			m(T, \lambda, x_0^j) = \max_{x \in M_j} m(T, \lambda, x) \quad \text{for } j = 2, \ldots, J.
		\]
\end{itemize}
The numbers
\[
m_i(T, \lambda) : = m(T, \lambda, x_0^i) \quad \text{for } j=2, \ldots, J
\]
are called the partial multiplicities of $\lambda$. The number
\[
m(\lambda) := \sum_{i=1}^J m_i(T, \lambda)
\]
is called the algebraic multiplicity of $\lambda$ and coincides with the dimension of the generalized eigenspace $G(\lambda)$.


Let $X_n, Y_n$ be Banach spaces, not necessarily subspaces of $X, Y$.
Denote by $\Phi_0(X, Y)$ the sets in $\mathcal{L}(X, Y)$ of all Fredholm operators and with index zero.
Consider a sequence of holomorphic Fredholm operator functions 
\[
T_n: \Omega \to  \Phi_0(X_n, Y_n), \quad n \in \mathbb N.
\]
Assume that the following approximation properties hold.
\begin{itemize}
\item[A1.] There exist linear bounded mapping $p_n \in \mathcal{L}(X, X_n)$,
		$q_n \in \mathcal{L}(Y, Y_n)$ such that
		\begin{eqnarray*}
			&&\lim_{n \to \infty} \|p_n x\|_{X_n}=\|x\|_X, \quad x \in X, \\
			&&\lim_{n \to \infty} \|q_n y\|_{Y_n}=\|y\|_Y, \quad y \in Y.
		\end{eqnarray*}
\item[A2.] The sequence $\{T_n\}$ satisfies
		 \[
		 	\sup_{n \in N} \sup_{\eta \in \Omega} \|T_n(\eta)\| < \infty.
		 \]
\item[A3.] $T_n(\eta)$ converges regularly to $T(\eta)$ for all $\eta \in \Omega$, i.e., 
		\begin{itemize}
		\item[(a)] $\lim_{n \to \infty} \|T_n(\eta) p_n x - q_n T(\eta) x\|_{Y_n} = 0, x \in X$,
		\item[(b)] for any subsequence $x_n \in X_n, n \in N' \subset \mathbb N$ with
			$\|x_n\|_{X_n}, n \in N'$ bounded and 
			$\lim_{N' \ni n \to \infty} \|T_n(\eta)x_n - q_ny\|_{Y_n} = 0$ for
			some $y \in Y$, there exists a subsequence $N'' \subset N'$ such that
			\[
				\lim_{N'' \ni n \to \infty} \|x_n - p_n x\|_{X_n} = 0.
			\]
		\end{itemize}
\end{itemize}

If the above conditions are satisfied, one has the following abstract approximation result
(see, e.g., Theorem 2.10 of \cite{Beyn2014} or Section 2 of \cite{Karma1996b}). 
\begin{theorem}\label{Thm210} 
Assume that (A1)-(A3) hold. 
 For any $\lambda \in \sigma(T)$ there exists $n_0 \in {\mathbb N}$ and a sequence $\lambda_n \in \sigma(T_n)$,
	$n \ge n_0$, such that $\lambda_n \to \lambda$ as $n \to \infty$. For any sequence $\lambda_n \in \sigma(T_n)$
	with this convergence property, 
	one has that
	\[
		|\lambda_n - \lambda| \le C \epsilon_n^{1/\kappa}, 
	\]
	where
	\[
		\epsilon_n = \max_{|\eta-\lambda| \le \delta}\max_{v \in G(\lambda)} \| T_n(\eta)p_n v - q_n T(\eta) v\|_{X_n},
	\]
	for sufficiently small $\delta > 0$.
\end{theorem}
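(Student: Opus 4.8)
The statement above is the abstract convergence theorem of Karma and Beyn \cite{Karma1996a,Karma1996b,Beyn2014}, and the plan is to reproduce its proof in three stages: localize the spectral problem near $\lambda$; use the regular convergence (A1)--(A3) to stabilize $T_n(\eta)^{-1}$ on a small circle around $\lambda$ and thereby detect nearby eigenvalues of $T_n$; and finally reduce the local problem to a finite matrix pencil on the generalized eigenspace $G(\lambda)$, on which the rate $\epsilon_n^{1/\kappa}$ is read off by a Rouch\'e argument. \emph{Step 1 (localization and discrete stability).} Since $\sigma(T)$ has no cluster point in $\Omega$, fix $\delta>0$ so small that $\overline{B_\delta(\lambda)}\cap\sigma(T)=\{\lambda\}$; then $T(\eta)^{-1}$ is bounded for $0<|\eta-\lambda|\le\delta$, in particular on $\Gamma:=\{\eta:|\eta-\lambda|=\delta\}$. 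The first task is a stability lemma: there are $n_0\in\mathbb N$ and $c_0>0$ with $\|T_n(\eta)x_n\|_{Y_n}\ge c_0\|x_n\|_{X_n}$ for all $n\ge n_0$, $\eta\in\Gamma$ and $x_n\in X_n$; since $\text{ind}\,T_n(\eta)=0$, this yields uniformly bounded inverses, $\sup_{n\ge n_0}\sup_{\eta\in\Gamma}\|T_n(\eta)^{-1}\|<\infty$. The proof is the classical ``regular convergence implies stable convergence'' argument: were it false there would be $\eta_m\in\Gamma$, $n_m\to\infty$ and $x_{n_m}$ with $\|x_{n_m}\|_{X_{n_m}}=1$ and $\|T_{n_m}(\eta_m)x_{n_m}\|_{Y_{n_m}}\to0$; passing to $\eta_m\to\eta^\ast\in\Gamma$ (using A2 and the equicontinuity of the holomorphic family to replace $\eta_m$ by $\eta^\ast$) and invoking A3(b), one extracts a subsequence with $\|x_{n_m}-p_{n_m}x\|\to0$ for some $x\in X$, necessarily $\|x\|_X=1$ by A1, and then A3(a) forces $T(\eta^\ast)x=0$, contradicting $\eta^\ast\in\rho(T)$.

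\emph{Step 2 (existence of $\lambda_n\to\lambda$).} From Step 1 and (A1)--(A3), integrating over the compact contour $\Gamma$ against the uniformly bounded holomorphic integrand gives the convergence $T_n(\eta)^{-1}q_n\to T(\eta)^{-1}$ uniformly on $\Gamma$, in the sense compatible with $p_n$. By the generalized argument principle for holomorphic Fredholm operator functions (Section 2.3 of \cite{Karma1996a}; see also \cite{Beyn2014}), the sum $N_n$ of the algebraic multiplicities of the eigenvalues of $T_n$ inside $\Gamma$ is given by a contour integral built from $T_n'$ and $T_n^{-1}$, and this integral depends continuously on the above data, so $N_n\to N=m(\lambda)\ge1$. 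Hence $N_n\ge1$ for $n$ large, i.e. $T_n$ has an eigenvalue in $B_\delta(\lambda)$. Since $\delta>0$ was arbitrary, letting $\delta=\delta_k\downarrow0$ and extracting diagonally produces $\lambda_n\in\sigma(T_n)$ with $\lambda_n\to\lambda$; conversely, any sequence $\lambda_n\in\sigma(T_n)$ converging to $\lambda$ lies, for $n$ large, inside every such $B_\delta(\lambda)$, so the rate bound of Step 3 applies to it.

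\emph{Step 3 (rate of convergence).} Two ingredients are needed. First, a structural fact from Gohberg--Sigal theory: $T(\cdot)^{-1}$ has a pole at $\lambda$ of order exactly $\kappa=\kappa(T,\lambda)$, and the local Smith form represents $T(\eta)$, up to holomorphic boundedly invertible left and right factors and an identity block on a fixed closed complement, by a finite matrix pencil $\mathcal A(\eta)=\mathrm{diag}\big((\eta-\lambda)^{m_1},\dots,(\eta-\lambda)^{m_J}\big)$ on $G(\lambda)$, with $m_1=\kappa$ and $\sum_i m_i=m(\lambda)$. Second, performing the analogous reduction for $T_n$ — lifting the finitely many projections and complements through $p_n,q_n$ and using Step 1 — yields a finite matrix pencil $\mathcal A_n(\eta)$ such that $T_n(\eta)$ is invertible iff $\det\mathcal A_n(\eta)\ne0$ and
\[
\sup_{|\eta-\lambda|\le\delta}\big\|\mathcal A_n(\eta)-\mathcal A(\eta)\big\|\ \le\ C\,\epsilon_n ,
\]
the right-hand side being, up to the bounded norms of the lifts, exactly the $\epsilon_n$ of the theorem — which is why $\epsilon_n$ is taken over $v\in G(\lambda)$. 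Granting this, expanding the determinant of $\mathrm{diag}\big((\eta-\lambda)^{m_i}\big)+O(\epsilon_n)$, the contribution in which the perturbation replaces the single largest diagonal entry $(\eta-\lambda)^{\kappa}$ dominates all other (higher-order) minors, giving
\[
\det\mathcal A_n(\eta)=g(\eta)\Big[(\eta-\lambda)^{m(\lambda)}+h_n(\eta)\Big],\qquad |h_n(\eta)|\le C\,\epsilon_n\,|\eta-\lambda|^{\,m(\lambda)-\kappa},
\]
with $g$ holomorphic and non-vanishing near $\lambda$. Hence on $|\eta-\lambda|=r$ with $r^{\kappa}>2C\epsilon_n$ one has $\det\mathcal A_n(\eta)\ne0$, so $T_n$ has no eigenvalue there; consequently every eigenvalue of $T_n$ in $B_\delta(\lambda)$, in particular $\lambda_n$ for $n$ large, satisfies $|\lambda_n-\lambda|\le(2C\epsilon_n)^{1/\kappa}=C'\epsilon_n^{1/\kappa}$.

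\emph{Main obstacle.} Steps 1 and 2 are routine once the regular-convergence framework is set up, and the Rouch\'e/Smith-form computation in Step 3 is classical. The real difficulty is the second ingredient of Step 3: making the finite-dimensional reduction of $T_n$ \emph{uniform in $n$} — choosing the complements and projections associated with $G(\lambda)$, transporting them through $p_n$ and $q_n$, and controlling the reduced pencils $\mathcal A_n$ in sup-norm on $B_\delta(\lambda)$ at the precise rate $\epsilon_n$ with constants independent of $n$. A naive comparison of eigenelements yields only the existence part and the exponent $1$ (or worse) in $\epsilon_n$; the sharp exponent $1/\kappa$ genuinely requires this block-structured perturbation analysis, where the largest partial multiplicity $\kappa=m_1$ enters as the block most amplified by the $O(\epsilon_n)$ perturbation.
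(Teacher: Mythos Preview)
The paper does not provide its own proof of this theorem. It is stated in Section~2 as a preliminary result quoted directly from the literature, with the remark ``see, e.g., Theorem~2.10 of \cite{Beyn2014} or Section~2 of \cite{Karma1996b}''; no argument is given or even sketched in the paper itself. So there is nothing in the paper to compare your proposal against.

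That said, your three-step outline (discrete stability on a small circle via regular convergence, existence via the generalized argument principle, and the rate via a local Smith-form/Rouch\'e reduction on $G(\lambda)$) is a faithful summary of how the result is actually proved in the cited sources, and your identification of the ``main obstacle'' --- carrying out the finite-dimensional reduction of $T_n$ uniformly in $n$ so that the reduced pencils differ by $O(\epsilon_n)$ --- is exactly the technical heart of Karma's argument in \cite{Karma1996b}. As a sketch it is sound; turning it into a complete proof would require filling in precisely that uniform-reduction step, for which one should simply consult \cite{Karma1996a,Karma1996b,Beyn2014} rather than reproduce it here.
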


\section{Transmission Eigenvalue Problem}
In this section, we reformulate the transmission eigenvalue problem \eqref{ATE} as an eigenvalue problem of a holomorphic operator function.
To this end, consider the following Helmholtz equation with Robin boundary condition. 
Given a function $g \in L^2(\partial D)$, find $u$ such that
\begin{subequations}\label{HelmholtzA}
\begin{align}
\label{HelmholtzAE}\nabla \cdot A \nabla u+\eta n u&=0,\quad&\mathrm{in}\quad D,\\[1mm]
\label{HelmholtzAB}{\partial}_A u -iu&=g,\quad&\mathrm{on}\quad\partial D.
\end{align}
\end{subequations}
The weak form is to find $u\in H^1(D)$ such that
\begin{equation}\label{HelmholtzAW}
(A\nabla u,\nabla {v})_D-i\langle u, {v}\rangle_{\partial D}-\eta(nu, {v})_D=\langle g, {v}\rangle_{\partial D}\quad \text{for all } v\in H^1(D).
\end{equation}
Let $\mathbb{C}_0^+=\{\eta\in\mathbb{C}:\Im\eta\ge 0\}$, where $\Im\eta$ denotes the imaginary part of $\eta$. We have the
following well-posedness result for \eqref{HelmholtzAW}. Its proof is provided for later reference.
\begin{lemma}\label{LeAnH}
If $\eta \in\mathbb{C}_0^+$, then \eqref{HelmholtzAW} has a unique solution $u\in H^1(D)$ for $g\in L^2(\partial D)$.
Furthermore,
\[
\|u\|_{H^1(D)} \le C \|g\|_{L^2(\partial D)}.
\]
\end{lemma}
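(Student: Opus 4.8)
The plan is to recast \eqref{HelmholtzAW} as an operator equation in $H^1(D)$, reduce well-posedness to uniqueness via a G{\aa}rding inequality and the Fredholm alternative, and then extract uniqueness by testing with the solution itself and taking imaginary parts. Introduce the sesquilinear form
\[
a_\eta(u,v) := (A\nabla u,\nabla v)_D - i\langle u,v\rangle_{\partial D} - \eta(nu,v)_D, \qquad u,v\in H^1(D).
\]
By the trace theorem $\|v\|_{L^2(\partial D)}\le C\|v\|_{H^1(D)}$ together with $A,n\in L^\infty(D)$, the form $a_\eta$ is bounded on $H^1(D)\times H^1(D)$, so it induces a bounded operator $\mathcal{A}(\eta)\colon H^1(D)\to H^1(D)'$ via $\langle \mathcal{A}(\eta)u,v\rangle=a_\eta(u,v)$, and \eqref{HelmholtzAW} is equivalent to $\mathcal{A}(\eta)u=\ell_g$ with $\ell_g(v):=\langle g,v\rangle_{\partial D}$ and $\|\ell_g\|_{H^1(D)'}\le C\|g\|_{L^2(\partial D)}$.

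First I would establish a G{\aa}rding inequality. Taking the real part, using $\operatorname{Re}\bigl(i\|u\|_{L^2(\partial D)}^2\bigr)=0$, the ellipticity hypothesis $\xi\cdot\operatorname{Re}(A)\xi\ge\gamma|\xi|^2$, and $\|n\|_{L^\infty}<\infty$, one gets
\[
\operatorname{Re}a_\eta(u,u)=(\operatorname{Re}(A)\nabla u,\nabla u)_D-\operatorname{Re}(\eta)(nu,u)_D\ge \gamma\|\nabla u\|_{L^2(D)}^2-|\operatorname{Re}\eta|\,\|n\|_{L^\infty}\|u\|_{L^2(D)}^2,
\]
hence $\operatorname{Re}a_\eta(u,u)\ge \gamma\|u\|_{H^1(D)}^2-C_0\|u\|_{L^2(D)}^2$ with $C_0=|\operatorname{Re}\eta|\,\|n\|_{L^\infty}+\gamma$. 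Since $H^1(D)\hookrightarrow L^2(D)$ is compact (Rellich), $\mathcal{A}(\eta)$ differs by a compact operator from the coercive (hence boundedly invertible) operator associated with $a_\eta(\cdot,\cdot)+C_0(\cdot,\cdot)_D$, so $\mathcal{A}(\eta)$ is Fredholm of index zero. Therefore $\mathcal{A}(\eta)$ is boundedly invertible if and only if it is injective, and in that case $\|u\|_{H^1(D)}\le \|\mathcal{A}(\eta)^{-1}\|\,\|\ell_g\|_{H^1(D)'}\le C\|g\|_{L^2(\partial D)}$, which is the claimed estimate (with $C$ depending on $\eta$). It remains only to prove uniqueness.

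For uniqueness, set $g=0$ and take $v=u$ in \eqref{HelmholtzAW}: $(A\nabla u,\nabla u)_D-i\|u\|_{L^2(\partial D)}^2-\eta(nu,u)_D=0$. Because $A$ is symmetric, $(A\nabla u,\nabla u)_D$ has imaginary part $(\operatorname{Im}(A)\nabla u,\nabla u)_D\le 0$, while $(nu,u)_D\ge0$ is real and $\operatorname{Im}\eta\ge0$; taking the imaginary part gives
\[
0=(\operatorname{Im}(A)\nabla u,\nabla u)_D-\|u\|_{L^2(\partial D)}^2-\operatorname{Im}(\eta)(nu,u)_D,
\]
a sum of three non-positive terms, so each vanishes. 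In particular $u=0$ on $\partial D$, and if $\operatorname{Im}\eta>0$ then $(nu,u)_D=0$ with $n>0$ forces $u\equiv0$ in $D$. In the remaining case $\operatorname{Im}\eta=0$ we have $u\in H^1_0(D)$, and testing \eqref{HelmholtzAW} against arbitrary $v\in H^1(D)$ (the boundary term drops) gives $(A\nabla u,\nabla v)_D=\eta(nu,v)_D$; extending $u$ by zero and $A,n$ by $I,1$ outside $D$, one checks that the extension is an $H^1$ weak solution of $\nabla\cdot(A\nabla u)+\eta nu=0$ on a ball containing $\overline D$ which vanishes on a nonempty open set, so the unique continuation principle yields $u\equiv0$.

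The main obstacle is precisely this last step. For the anisotropic divergence-form operator with merely $L^\infty$ coefficients, unique continuation fails in general in dimension three and higher, but it does hold in two dimensions for uniformly elliptic operators with bounded measurable coefficients (through the quasiconformal/pseudo-analytic representation of $H^1$ solutions) — which is exactly the setting $D\subset\mathbb{R}^2$ assumed in the paper. I would invoke that result and note it is what excludes real $\eta$ from being an exceptional value; everything else (continuity of the form, the G{\aa}rding estimate, the Fredholm alternative, and the imaginary-part computation) is routine.
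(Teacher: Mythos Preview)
Your proof is correct and follows essentially the same route as the paper: a G{\aa}rding inequality plus the Fredholm alternative to reduce well-posedness to uniqueness, then the imaginary-part computation on $a_\eta(u,u)=0$ to force $u|_{\partial D}=0$, followed by unique continuation. The only cosmetic differences are that the paper deduces $\partial_A u=0$ on $\partial D$ from the Robin condition and invokes UCP via vanishing Cauchy data (rather than your extension-by-zero formulation), and it neither separates out the easy case $\Im\eta>0$ nor comments, as you do, on why UCP is available in two dimensions for merely $L^\infty$ coefficients.
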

\begin{proof}
Define
\[
a(u, v) := (A\nabla u, \nabla {v})_D-i\langle u, {v}\rangle_{\partial D}-\eta(nu, {v})_D.
\]
It is easy to verify that $a(u, v)$ satisfies the G{\aa}rding's inequality \cite{BrennerScott2008}, 
i.e., there exist $K>0$ large enough and $\alpha_0 > 0$ such that
\begin{equation}\label{Garding}
\text{Re}\left\{a(v, v)\right\} + K\|v\|^2_{L^2(D)} \ge \alpha_0 \|v\|^2_{H^1(D)} \quad \text{for all } u \in H^1(D).
\end{equation}

Hence, it suffices to prove the uniqueness. If $u$ is the solution for $g=0$, then by setting $v={u}$ one has that
\[
(A\nabla u,\nabla {u})_D-i\|u\|_{L^2(\partial D)}^2-\eta\|n^{\frac{1}{2}}u\|_{L^2(D)}^2=0.
\]
The imaginary part of the above equation is simply
\[
-\|u\|_{L^2(\partial D)}^2-\Im\eta\|n^{\frac{1}{2}}u\|_{L^2(D)}^2\ge0,
\]
which implies $u=0$ on $\partial D$. Therefore, we have $\partial_{A}u=0$. By
the unique continuation theorem, we have $u=0$ on $D$.

Let the coercive sesquilinear form $a_+(\cdot, \cdot)$ be given by
\begin{equation}\label{aplus}
a_+(u, v) := a(u,v)+K(u,{v}).
\end{equation}
Define a compact operator $\mathcal{K}: L^2(D) \to H^1(D)$ such that $\mathcal{K} u$ solves the following equation
\[
a_+(\mathcal{K} u, v) = -K(u, {v})_D \quad \text{for all } v \in H^1(D).
\]
Similarly, there exists a unique $f \in H^1(D)$ such that 
\[
a_+(f, v) = \langle g, {v}\rangle_{\partial D}\quad \text{for all } v\in H^1(D).
\]
If $u$ solves \eqref{HelmholtzAW}, it satisfies
\[
a_+(u, v) = K(u, {v})_D +  \langle g,{v}\rangle_{\partial D}\quad \text{for all } v\in H^1(D).
\]
Hence $u$ satisfies the operator equation
\begin{equation}\label{IKuf}
(I+\mathcal{K}) u = f.
\end{equation}
The Fredholm alternative (see, e.g., Theorem 1.1.12 of \cite{SunZhou2016}) leads to
\[
\|u\|_{H^1(D)} \le C \|f\|_{H^1(D)} \le C \|g\|_{L^2(\partial D)}
\]
and the proof is complete.
\end{proof}

Consequently, we have a solution operator 
\[
S_1(\eta): L^2(\partial D) \to H^1(D) \quad \text{such that }\, u:=S_1(\eta) g.
\]

\begin{theorem}\label{SH}
The operator $S_1: \mathbb C_0^+ \to \mathcal{L}(L^2(\partial D), H^1(D))$ is holomorphic.
\end{theorem}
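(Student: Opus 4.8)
The plan is to exhibit $S_1(\eta)$ as the composition of an entire $\mathcal L(H^1(D),H^1(D))$-valued map with the operation of operator inversion, which is holomorphic on the open set of invertible operators, followed by composition on the right with a fixed bounded operator. First I would rewrite the weak form \eqref{HelmholtzAW} as an operator equation in $H^1(D)$ whose invertible principal part does not depend on $\eta$. Fix $K_0>0$ and set $c(u,v):=(A\nabla u,\nabla v)_D-i\langle u,v\rangle_{\partial D}+K_0(u,v)_D$. Using $\xi\cdot\mathrm{Re}(A)\xi\ge\gamma|\xi|^2$ together with the elementary fact that $\mathrm{Re}(A\nabla v,\nabla v)_D\ge\gamma\|\nabla v\|_{L^2(D)}^2$ and $\mathrm{Re}(-i\langle v,v\rangle_{\partial D})=0$, the sesquilinear form $c$ is bounded (trace theorem) and coercive on $H^1(D)$, so Lax--Milgram together with the Riesz representation theorem yields an isomorphism $\mathcal A\in\mathcal L(H^1(D),H^1(D))$ with bounded inverse such that $c(u,v)=(\mathcal A u,v)_{H^1(D)}$ for all $u,v$.

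Next I would isolate the $\eta$-dependence. Since $a(u,v)=c(u,v)-((K_0+\eta n)u,v)_D$, I define $\mathcal G(\eta)\in\mathcal L(H^1(D),H^1(D))$ by $(\mathcal G(\eta)u,v)_{H^1(D)}=((K_0+\eta n)u,v)_D$ and $R\in\mathcal L(L^2(\partial D),H^1(D))$ by $(Rg,v)_{H^1(D)}=\langle g,v\rangle_{\partial D}$; the latter is bounded by the trace theorem and is independent of $\eta$. The pairing defining $\mathcal G(\eta)$ factors through the compact embedding $H^1(D)\hookrightarrow L^2(D)$ (Rellich; $D$ bounded Lipschitz), so each $\mathcal G(\eta)$ is compact, and $\mathcal G(\eta)=K_0\mathcal G_1+\eta\mathcal G_2$ for two fixed compact operators $\mathcal G_1,\mathcal G_2$, whence $\eta\mapsto\mathcal G(\eta)$ is entire. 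With this notation \eqref{HelmholtzAW} is equivalent to $(\mathcal A-\mathcal G(\eta))u=Rg$. Writing $\mathcal A-\mathcal G(\eta)=\mathcal A(I-\mathcal A^{-1}\mathcal G(\eta))$ with $\mathcal A^{-1}\mathcal G(\eta)$ compact, the uniqueness part of Lemma \ref{LeAnH} shows that $I-\mathcal A^{-1}\mathcal G(\eta)$ is injective for $\eta\in\mathbb C_0^+$, hence bijective with bounded inverse by the Fredholm alternative; thus $\mathcal A-\mathcal G(\eta)$ is invertible in $\mathcal L(H^1(D),H^1(D))$ and $S_1(\eta)=(\mathcal A-\mathcal G(\eta))^{-1}R$.

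The conclusion then follows from the standard holomorphy of operator inversion. The invertible elements of $\mathcal L(H^1(D),H^1(D))$ form an open set, and $T\mapsto T^{-1}$ is holomorphic on it: for $T_0$ invertible and $\|T-T_0\|<\|T_0^{-1}\|^{-1}$ one has the norm-convergent power series $T^{-1}=\sum_{k\ge0}(-T_0^{-1}(T-T_0))^kT_0^{-1}$ in $T-T_0$. Since $\eta\mapsto\mathcal A-\mathcal G(\eta)$ is entire and takes invertible values on $\mathbb C_0^+$ (hence, by openness, on a full complex neighbourhood of each point of $\mathbb C_0^+$), the composite $\eta\mapsto(\mathcal A-\mathcal G(\eta))^{-1}$ is holomorphic there; composing on the right with the fixed bounded operator $R$ is a bounded linear operation and preserves holomorphy, so $S_1$ is holomorphic on $\mathbb C_0^+$.

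I do not expect a serious obstacle here; the argument is essentially bookkeeping. The two points that need care are (i) choosing the invertible principal part $\mathcal A$ to be independent of $\eta$, so that all $\eta$-dependence sits in the entire compact-operator term $\mathcal G(\eta)$, and (ii) making sense of ``holomorphic on the closed half-plane $\mathbb C_0^+$'', which is justified because invertibility of $\mathcal A-\mathcal G(\eta)$, once known on $\mathbb C_0^+$, persists on a complex neighbourhood of each point by openness, so the Neumann series above supplies a genuine local holomorphic extension. As an alternative one could verify weak holomorphy of $S_1$ and invoke the equivalence of weak and strong holomorphy for Banach-space-valued maps; the resolvent identity $S_1(\eta)-S_1(\zeta)=(\eta-\zeta)(\mathcal A-\mathcal G(\eta))^{-1}\mathcal G_2(\mathcal A-\mathcal G(\zeta))^{-1}R$ makes this transparent and in fact exhibits $S_1'(\eta)=(\mathcal A-\mathcal G(\eta))^{-1}\mathcal G_2 S_1(\eta)$ directly.
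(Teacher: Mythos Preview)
Your argument is correct and takes a genuinely different route from the paper. The paper proceeds by a direct differentiability estimate: for fixed $g$, it sets $u=S_1(\eta)g$, $w=S_1(\eta+\delta\eta)g$, writes the equation satisfied by $w-u$, introduces the candidate derivative $\phi$ solving \eqref{PhiA}, and bounds $\|w-u-\phi\|_{H^1(D)}\le C|\delta\eta|^2\|g\|_{L^2(\partial D)}$ by two applications of the well-posedness estimate; this gives strong holomorphy of $\eta\mapsto S_1(\eta)g$, and a theorem from \cite{Gohberg2009} is then invoked to pass to operator-norm holomorphy. Your approach instead factorises $S_1(\eta)=(\mathcal A-\mathcal G(\eta))^{-1}R$ with $\mathcal A$ a fixed isomorphism and $\mathcal G$ affine in $\eta$, and deduces holomorphy from that of operator inversion via the Neumann series. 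Your method is more structural: it yields operator-norm holomorphy directly without the auxiliary Gohberg--Leiterer step, and makes the analytic-Fredholm structure of the problem transparent, which is convenient for the later discussion of $T(\eta)$. The paper's method, on the other hand, is closer to the PDE and explicitly identifies the derivative as the solution of a concrete source problem; your final remark on the resolvent identity $S_1'(\eta)=(\mathcal A-\mathcal G(\eta))^{-1}\mathcal G_2 S_1(\eta)$ is in fact the operator-theoretic rewriting of exactly that computation.
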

\begin{proof}
Let $\eta, \eta + \delta \eta \in \Omega$. For a fixed $g \in L^2(\partial D)$, let $u$ be the solution of
\[
(A\nabla u,\nabla {v})_D-i\langle u,{v}\rangle_{\partial D}-\eta(nu,{v})_D=\langle g,{v}\rangle_{\partial D}\quad \text{for all } v\in H^1(D),
\]
and $w$ be the solution of
\[
(A\nabla w,\nabla {v})_D-i\langle w, {v}\rangle_{\partial D}-(\eta+\delta \eta)(nw,{v})_D=\langle g,{v}\rangle_{\partial D}\quad \text{for all } v\in H^1(D).
\]
Then one has that
\[
(A\nabla (w-u),\nabla {v})_D-i\langle (w-u),{v}\rangle_{\partial D}-\eta(n(w-u),{v})_D= \delta \eta (n w, {v})_D\quad \text{for all } v\in H^1(D).
\]
The above problem has a unique solution $w-u \in H^1(D)$.
Let $\phi$ be the solution of
\begin{equation}\label{PhiA}
(A\nabla \phi,\nabla {v})_D-i\langle \phi, {v}\rangle_{\partial D}-\eta(n\phi,{v})_D= \delta \eta (n u, {v})_D\quad \text{for all } v\in H^1(D).
\end{equation}
Then
\begin{eqnarray*}
 \|w-u -\phi\|_{H^1(D)} &\le& C |\delta \eta| \|u-w\|_{L^2(D)} \\
 	&\le& C |\delta \eta|^2 \|w\|_{L^2(D)}\\
	&\le& C |\delta \eta|^2 \|g\|_{L^2(\partial D)}.
\end{eqnarray*}
Hence $S_1(\eta)g$ is holomorphic on $\mathbb C_0^+$ and thus $S_1(\eta)$ is holomorphic by Theorem 1.7.1 of \cite{Gohberg2009}.
\end{proof}

Define an operator $S_0:L^2(\partial D) \to H^1(D)$ such that $v:=S_0(\eta) g$
solves \eqref{HelmholtzA} with $A=I$ and $n(x)= 1$. In this case, $\partial_I v$ is simply $\partial_\nu v$. 
Due to Lemma \ref{LeAnH}, $S_j(\eta), j=0, 1$, which maps $g\in L^2(\partial D)$ to 
$u|_{\partial D} \in H^{\frac{1}{2}}(\partial D)$ is bounded for any $\eta\in\mathbb{C}$ with $\Im \eta\ge 0$.
Therefore, there exists a neighborhood $\hat{\mathbb{C}}$ of $\mathbb{C}_0^+=\{\eta\in\mathbb{C}:\Im\eta\ge 0\}$,
such that $S_j(\eta), j=1,2,$ is holomorphic in $\hat{\mathbb{C}}$.

Let $\Omega \subset \hat{\mathbb{C}}$ be a compact set. Consider the operator function 
\[
T: \Omega \to  \mathcal{L}(L^2(\partial D),  L^2(\partial D))
\]
 defined by
\begin{equation}\label{Teta}
T(\eta)=\mathcal{I}[S_1(\eta)-S_0(\eta)],
\end{equation}
where $\mathcal{I}$ is the trace operator from $H^1(D)$ into $L^2(\partial D)$.

{\bf Remark:} The operators $\mathcal{I}S_1$ and  $\mathcal{I}S_0$ are the Robin-to-Dirichlet operators.
Under the assumptions that $A=I$ and $n(x)$ is a constant, 
a similar formulation is proposed in \cite{CakoniKress2017} using the boundary integral equation method for
the transmission eigenvalue problem.

\begin{lemma}
The operator function $T(\eta)$ is holomorphic in $\hat{\mathbb{C}}$. 
\end{lemma}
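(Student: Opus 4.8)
The plan is to deduce the holomorphy of $T(\eta) = \mathcal{I}[S_1(\eta) - S_0(\eta)]$ from the already-established holomorphy of the solution operators $S_1$ and $S_0$ together with the boundedness of the trace operator $\mathcal{I}$. The key structural observation is that holomorphy of operator-valued functions is preserved under composition with a fixed bounded linear operator and under finite linear combinations. Theorem~\ref{SH} gives that $S_1: \hat{\mathbb C} \to \mathcal{L}(L^2(\partial D), H^1(D))$ is holomorphic, and the identical argument (applied with $A=I$, $n=1$) shows the same for $S_0$; the trace operator $\mathcal{I}: H^1(D) \to L^2(\partial D)$ is bounded by the trace theorem and, crucially, independent of $\eta$.

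The steps I would carry out, in order, are as follows. First, recall that $S_1$ and $S_0$ are holomorphic on the neighborhood $\hat{\mathbb C}$ of $\mathbb C_0^+$, hence in particular on the compact set $\Omega \subset \hat{\mathbb C}$. Second, observe that the difference $S_1(\eta) - S_0(\eta) \in \mathcal{L}(L^2(\partial D), H^1(D))$ is holomorphic, since the space of holomorphic operator-valued functions is closed under pointwise subtraction (the difference quotient of the difference is the difference of the difference quotients, and both converge in operator norm). Third, post-compose with the fixed bounded operator $\mathcal{I}$: for any $\eta, \eta+\delta\eta \in \Omega$,
\[
\frac{\mathcal{I}[S_1(\eta+\delta\eta) - S_0(\eta+\delta\eta)] - \mathcal{I}[S_1(\eta) - S_0(\eta)]}{\delta\eta} = \mathcal{I}\left(\frac{[S_1 - S_0](\eta+\delta\eta) - [S_1 - S_0](\eta)}{\delta\eta}\right),
\]
and since $\mathcal{I}$ is bounded, the right-hand side converges in $\mathcal{L}(L^2(\partial D), L^2(\partial D))$ as $\delta\eta \to 0$ to $\mathcal{I}(S_1 - S_0)'(\eta)$. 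This exhibits the complex derivative of $T$ at every $\eta \in \Omega$, so $T$ is holomorphic on $\hat{\mathbb C}$ (working on a slightly enlarged open set so that the derivative exists on all of $\Omega$).

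This statement is essentially a formal consequence of the preceding results, so there is no genuine obstacle; the only point requiring a moment of care is making sure the domain is an open set containing $\Omega$ — one simply shrinks $\hat{\mathbb C}$ if necessary so that both $S_1$ and $S_0$ are holomorphic there, which is already guaranteed by the discussion following the definition of $S_0$. An alternative, equally short route would be to invoke the scalar characterization of holomorphy (Theorem~1.7.1 of \cite{Gohberg2009}, already used in the proof of Theorem~\ref{SH}): for each fixed $g \in L^2(\partial D)$ the map $\eta \mapsto T(\eta)g = \mathcal{I}[S_1(\eta) - S_0(\eta)]g$ is an $L^2(\partial D)$-valued holomorphic function, being the image under the continuous linear map $\mathcal{I}$ of the $H^1(D)$-valued holomorphic function $[S_1(\eta) - S_0(\eta)]g$, and weak (indeed strong pointwise) holomorphy of a locally bounded operator-valued function implies holomorphy in the operator norm.
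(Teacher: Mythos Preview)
Your proposal is correct and follows essentially the same approach as the paper: the paper's proof simply remarks that it is clear from the proof of Theorem~\ref{SH} that $\mathcal{I}S_1(\eta)$ is holomorphic and hence so is $T(\eta)$. Your argument spells out in more detail the (standard) facts that holomorphy is preserved under subtraction and under composition with the fixed bounded trace operator $\mathcal{I}$, but the underlying idea is identical.
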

\begin{proof}
It is clear form the proof of  Theorem~\ref{SH} that $\mathcal{I} S_1(\eta)$ is holomorphic.
Consequently, $T(\eta)$ is holomorphic in $\hat{\mathbb{C}}$.
\end{proof}

\begin{theorem}
A complex number $\lambda$ is an eigenvalue of $T$ if and only if it is a transmission eigenvalue of \eqref{ATE}.
\end{theorem}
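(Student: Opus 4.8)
The plan is to establish the equivalence by unwinding the definitions of the solution operators $S_0, S_1$ and of an eigenvalue of $T$. Recall that an eigenvalue $\lambda$ of $T$ is a point where there exists $0 \neq g \in L^2(\partial D)$ with $T(\lambda) g = \mathcal{I}[S_1(\lambda) - S_0(\lambda)] g = 0$. Writing $u := S_1(\lambda) g$ and $v := S_0(\lambda) g$, this says exactly that $u|_{\partial D} = v|_{\partial D}$, i.e. $u - v = 0$ on $\partial D$. By construction $u$ solves \eqref{HelmholtzAE}–\eqref{HelmholtzAB} with parameter $\lambda$, so $\nabla \cdot A \nabla u + \lambda n u = 0$ in $D$ and $\partial_A u - i u = g$ on $\partial D$; likewise $v$ solves the same problem with $A = I$, $n = 1$, so $\Delta v + \lambda v = 0$ in $D$ and $\partial_\nu v - i v = g$ on $\partial D$. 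Subtracting the two boundary relations and using $u = v$ on $\partial D$ gives $\partial_A u - \partial_\nu v = i(u - v) = 0$ on $\partial D$. Thus $(u, v)$ solves the transmission system \eqref{ATE} with $\eta = \lambda$.

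First I would argue the forward direction: if $\lambda$ is an eigenvalue of $T$ with eigenelement $g$, the computation above produces a pair $(u, v)$ satisfying \eqref{ATE}. The one thing to check is that $(u, v)$ is nontrivial. If $g \neq 0$ but $u = S_1(\lambda) g$ and $v = S_0(\lambda) g$ were both identically zero, then from the boundary conditions $\partial_A u - i u = g$ and $\partial_\nu v - i v = g$ we would get $g = 0$, a contradiction; hence at least one of $u, v$ is nonzero, and in fact since $u = v$ on $\partial D$ and both satisfy the same Robin data, a short argument shows neither can vanish while the other does not unless $g = 0$. So $(u,v)$ is a genuine transmission eigenpair and $\lambda$ is a transmission eigenvalue.

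For the converse, suppose $\lambda$ is a transmission eigenvalue with nontrivial pair $(u, v)$ solving \eqref{ATE}. Set $g := \partial_A u - i u \in L^2(\partial D)$ (the trace makes sense since $u \in H^1(D)$ solves a second-order elliptic equation, so $\partial_A u \in H^{-1/2}(\partial D)$; one should note the regularity point here, or work with the weak Robin form directly). Using the boundary conditions $u = v$ and $\partial_A u = \partial_\nu v$ on $\partial D$, we get $g = \partial_\nu v - i v$ as well. Then $u$ is a weak solution of \eqref{HelmholtzAW} with right-hand side $g$, so by the uniqueness in Lemma~\ref{LeAnH} (valid since $\lambda \in \hat{\mathbb C}$, in particular for $\lambda \in \mathbb C_0^+$; for the extension to the neighborhood $\hat{\mathbb C}$ one invokes the holomorphic Fredholm structure) we have $u = S_1(\lambda) g$; similarly $v = S_0(\lambda) g$. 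Therefore $T(\lambda) g = \mathcal{I}(u - v) = u|_{\partial D} - v|_{\partial D} = 0$. It remains to check $g \neq 0$: if $g = 0$ then $u = S_1(\lambda) 0 = 0$ and $v = S_0(\lambda) 0 = 0$, contradicting nontriviality of $(u, v)$. Hence $g$ is an eigenelement of $T$ at $\lambda$.

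The main obstacle is not the algebra but the bookkeeping of function spaces and the precise meaning of the boundary conditions: one must make sure the conormal derivative $\partial_A u$ is well-defined (which requires noting that $u$ solves a PDE in $D$, not merely $u \in H^1$), that the Robin data $g$ genuinely lies in $L^2(\partial D)$ rather than only $H^{-1/2}(\partial D)$ — this is typically where an implicit extra regularity assumption or a reinterpretation via the weak form \eqref{HelmholtzAW} is needed — and that the solution operators $S_0, S_1$ really coincide with $u, v$ on all of $\hat{\mathbb C}$ and not just on $\mathbb C_0^+$, which uses the analytic/meromorphic continuation discussed after \eqref{sigmaT}. Once these identifications are pinned down, the equivalence is a direct translation of "$u$ and $v$ have the same trace" into "$T(\lambda)g = 0$".
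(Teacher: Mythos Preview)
Your proof is correct and follows essentially the same route as the paper's: define $u=S_1(\lambda)g$, $v=S_0(\lambda)g$, read off \eqref{ATEw}--\eqref{ATEbcN} from the two Robin problems and $T(\lambda)g=0$, and conversely set $g:=\partial_A u - iu = \partial_\nu v - iv$ and invoke uniqueness to recover $u=S_1(\lambda)g$, $v=S_0(\lambda)g$. In fact you are more careful than the paper, which does not verify nontriviality of $(u,v)$ or of $g$, nor address the regularity of $\partial_A u$; your added checks are appropriate.
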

\begin{proof}
Let $\lambda$ be an eigenvalue of $T$ and $g$ is such that $T(\lambda) g = 0$. 
Then let $u:=S_1(\lambda) g$ be the solution of 
\begin{subequations}\label{HelmholtzAL}
\begin{align}
\label{HelmholtzALE}\nabla \cdot A \nabla u+\lambda n u&=0,\quad&\mathrm{in}\quad D,\\[1mm]
\label{HelmholtzALB}{\partial}_A u -iu&=g,\quad&\mathrm{on}\quad \partial D.
\end{align}
\end{subequations}
and $v:=S_0(\lambda) g$ be the solution of 
\begin{subequations}\label{HelmholtzALI}
\begin{align}
\label{HelmholtzALIE}\nabla \cdot \nabla v+\lambda v&=0,\quad&\mathrm{in}\quad D,\\[1mm]
\label{HelmholtzALIB}{\partial}_\nu v -iv&=g,\quad&\mathrm{on}\quad\partial D.
\end{align}
\end{subequations}
Thus one has that
\[
{\partial}_A u -iu = {\partial}_\nu v -iv \quad \text{on } \partial D.
\]
Moreover, $T(\lambda) g = 0$ implies that
\[
u = v \quad \text{on } \partial D.
\]
Thus $(\lambda, u, v)$ satisfies \eqref{ATE}.

On the other hand side, if $(\lambda, u, v)$ satisfies \eqref{ATE}, one has that
\[
 {\partial}_A u -iu = {\partial}_\nu v -iv
\]
due to \eqref{ATEbcD} and \eqref{ATEbcN}. Let 
\[
g:=  {\partial}_A u -iu.
\]
Then $u=S_1(\lambda)g$ and $v=S_0(\lambda) g$. Using \eqref{ATEbcD}, one has that
\[
T(\lambda) g = \mathcal{I} (S_1(\lambda)g-S_0(\lambda) g) = \mathcal{I} (u -v) = 0.
\]
The proof is complete.
\end{proof}

%
%

{\bf Remark} There are other ways to formulate the transmission eigenvalue problem as an operator eigenvalue problem \eqref{ATE}. 
For example, consider the problem of finding $u$ such that
\begin{subequations}\label{HelmholtzB}
\begin{align}
\nabla \cdot A \nabla u+\eta n u&=0,\quad&\mathrm{in}\quad D,\\[1mm]
u&=g,\quad&\mathrm{on}\quad\partial D.
\end{align}
\end{subequations}

Recall that $\lambda$ is called a modified Dirichlet eigenvalue if
there exists a nontrivial solution $w$ to
\begin{eqnarray*}
\nabla \cdot A \nabla w+\lambda n w&=&0,\quad\mathrm{in}\quad D,\\
w&=&0,\quad\mathrm{on}\quad\partial D.
\end{eqnarray*}
In the case of $A=I$ and $n(x) = 1$, $\lambda$ is simply a Dirichlet eigenvalue.

If $\eta$ is neither a modified Dirichlet eigenvalue nor a Dirichlet eigenvalue, 
there exists a solution $u \in H^1(D)$. One has the Dirichlet-to-Neumann operator 
$\hat{S}_1: L^2(\partial D) \to H^{-1/2}(\partial D)$ such that
\[
\hat{S}_1(\eta) g = \frac{\partial u}{\partial \nu}.
\]
Consequently, the operator eigenvalue problem is to find $\eta$ and $g \ne 0$ such that
\[
\hat{T}(\eta) g := \left(\hat{S}_1(\eta) -\hat{S}_0(\eta) \right)g =0,
\]
where $\hat{S}_0$ is the Dirichlet-to-Neumann operator for \eqref{HelmholtzB} with $A=I$ and $n = 1$. 
Note that the requirement that $\eta$ can not be a modified Dirichlet eigenvalue or a Dirichlet eigenvalue could
generate unnecessary complications in the analysis and computation 
(see \cite{CossonniHaddar2013, ZengSunXu2016SCM, CakoniKress2017}).

%

\section{Finite Element Approximation}
In this section, we propose a finite element approximation $T_h(\eta)$ for $T(\eta)$. 
Let $\mathcal{T}_h$ be a regular triangular mesh for $D$ with mesh size $h$.
For simplicity, let $V_h \subset H^1(D)$ be the linear Lagrange finite element space associated with $\mathcal{T}_h$
and $V^B_h := \{v_h|_{\partial D}, v_h \in V_h\}$ be the restriction of $V_h$ on $\partial D$. It is clear that $V^B_h \subset L^2(\partial D)$.

The finite element formulation for \eqref{HelmholtzAW} is to find $u_h \in V_h$ such that
\begin{equation}\label{HADWeak}
(A\nabla u_h,\nabla v_h)_\Omega-i\langle u_h, v_h\rangle_\Gamma-\eta(nu_h, v_h)_\Omega=\langle p_h g,v_h\rangle_{\partial D} \quad \text{for all } v_h \in V_h,
\end{equation}
where $p_h: L^2(\partial D) \to V_h^B$ is the $L^2$ projection such that
\begin{equation}\label{ph}
\langle g, v_h \rangle_{\partial D} = \langle p_h g, v_h \rangle_{\partial D} \quad \text{for all } v_h \in V_h^B.
\end{equation}

\begin{lemma} Let $\eta \in \mathbb C_0^+$.
There exists a unique solution $u_h$ to \eqref{HADWeak}.
\end{lemma}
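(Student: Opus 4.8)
The plan rests on the observation that \eqref{HADWeak} is a square linear system on the finite-dimensional space $V_h$, so it suffices to prove uniqueness, i.e.\ that the homogeneous problem (with $p_h g = 0$) has only the trivial solution; existence then follows automatically. Thus I would suppose $u_h \in V_h$ satisfies $(A\nabla u_h,\nabla v_h)_D - i\langle u_h,v_h\rangle_{\partial D} - \eta(n u_h,v_h)_D = 0$ for all $v_h \in V_h$ and reproduce the uniqueness argument of Lemma~\ref{LeAnH} at the discrete level. The first step is to take $v_h = u_h$, which gives the energy identity
\[
(A\nabla u_h,\nabla u_h)_D - i\|u_h\|_{L^2(\partial D)}^2 - \eta\|n^{1/2}u_h\|_{L^2(D)}^2 = 0 .
\]
Splitting $u_h = u_h^{\mathrm R} + i u_h^{\mathrm I}$ into real and imaginary parts and using the symmetry of $A$ one gets $\mathrm{Im}(A\nabla u_h,\nabla u_h)_D = \int_D\big(\nabla u_h^{\mathrm R}\cdot\mathrm{Im}(A)\nabla u_h^{\mathrm R} + \nabla u_h^{\mathrm I}\cdot\mathrm{Im}(A)\nabla u_h^{\mathrm I}\big)\,dx \le 0$ by the hypothesis $\xi\cdot\mathrm{Im}(A)\xi\le 0$. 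Taking the imaginary part of the energy identity then yields $\mathrm{Im}(A\nabla u_h,\nabla u_h)_D - \|u_h\|_{L^2(\partial D)}^2 - \mathrm{Im}(\eta)\|n^{1/2}u_h\|_{L^2(D)}^2 = 0$, a sum of three nonpositive quantities (using $\mathrm{Im}\,\eta\ge 0$ and $n>0$), so each vanishes. In particular $u_h = 0$ on $\partial D$, and if $\mathrm{Im}\,\eta > 0$ then also $\|n^{1/2}u_h\|_{L^2(D)}=0$, so $u_h\equiv 0$ and the proof is complete in that case.

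For real $\eta$ the situation is more delicate. Now $u_h\in V_h\cap H_0^1(D)$, and since the boundary term has dropped out, $(A\nabla u_h,\nabla v_h)_D - \eta(n u_h,v_h)_D = 0$ for all $v_h\in V_h$; that is, $u_h$ has vanishing discrete Cauchy data (zero trace and zero discrete conormal derivative), the discrete counterpart of the configuration used in Lemma~\ref{LeAnH}. Taking the real part of the energy identity and using the coercivity assumption $\xi\cdot\mathrm{Re}(A)\xi\ge\gamma|\xi|^2$ together with the Poincaré inequality on $H_0^1(D)$ gives $\gamma\|\nabla u_h\|_{L^2(D)}^2 \le \eta\,\|n\|_{L^\infty(D)}\,C_P\,\|\nabla u_h\|_{L^2(D)}^2$, which already forces $u_h\equiv 0$ for every $\eta\le 0$ and, more generally, for $\eta \le \gamma/(\|n\|_{L^\infty(D)}C_P)$.

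The main obstacle is the remaining band of positive real $\eta$ beyond that threshold, where the energy estimate no longer closes: a hypothetical nontrivial $u_h$ would be a discrete Neumann-type eigenfunction of $-\nabla\cdot A\nabla - \eta n$ that also vanishes on $\partial D$, and in the discrete setting no unique continuation principle is available to rule this out, unlike in the continuous proof of Lemma~\ref{LeAnH}. A clean way around it is to note that the determinant of the system matrix is a polynomial in $\eta$ which, by the first part, does not vanish on the open upper half-plane and is therefore not identically zero; hence it has only finitely many (necessarily real) roots, and \eqref{HADWeak} is uniquely solvable for every $\eta\in\mathbb C_0^+$ outside this finite set. One then either excludes that set directly — each such $\eta$ would have to be simultaneously a discrete Dirichlet eigenvalue and a discrete Neumann-type eigenvalue sharing an eigenfunction, an over-determined condition one does not expect on a regular mesh — or simply removes it from the domain of $T_h$, which does not affect the spectral approximation carried out in the rest of the paper.
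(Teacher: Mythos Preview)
Your approach is the same one the paper intends: its entire proof is the single sentence ``Since the conforming finite element is used, the proof is the same as Lemma~\ref{LeAnH} for the continuous case.'' You reproduce exactly that argument---test with $v_h=u_h$, take the imaginary part, conclude $u_h|_{\partial D}=0$---so at the level of strategy there is no difference.

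Where you go further is in noticing, correctly, that the last step of Lemma~\ref{LeAnH} (from $u=0$ and $\partial_A u=0$ on $\partial D$ to $u\equiv 0$ via unique continuation) has no discrete counterpart, so the paper's one-line deferral does not actually close the case $\eta\in\mathbb R$. Your treatment is therefore more careful than the paper's: your energy/Poincar\'e argument disposes of $\eta\le \gamma/(C_P\|n\|_{L^\infty})$, and your determinant observation shows that the discrete system can fail to be invertible for at most finitely many real $\eta$ (depending on $h$). The honest conclusion is the one you reach: as stated, the lemma may fail at those exceptional real parameters---precisely the $\eta$ that are simultaneously discrete ``Neumann-type'' eigenvalues with an eigenfunction in $V_h\cap H_0^1(D)$---but this finite set can be removed from $\Omega$ without affecting the holomorphic-Fredholm framework or the convergence analysis that follows. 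That is a genuine refinement over what the paper writes; the paper simply does not address this point.
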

\begin{proof}
Since the conforming finite element is used, the proof is the same as Lemma~\ref{LeAnH} for the continuous case.
\end{proof}

Let $u_h$ be the solution of \eqref{HADWeak}. We define the discrete solution operator $S_1^h(\eta): L^2(\partial D) \to V_h$ such that
\[
 u_h = S_1^h(\eta) g
\]
and 
\begin{equation}\label{dnh}
f_1^h = \mathcal{I}S_1^h(\eta) g = u_h|_{\partial D},
\end{equation}
where $\mathcal{I}$ is the restriction of $u_h$ to $V_h^B$.

The following result of the error estimate is standard \cite{BrennerScott2008}. For completeness, we present a proof
for it.
\begin{theorem}\label{f1hf}
Let $\eta \in \mathbb C_0^+$ and assume that the solution of \eqref{HelmholtzAW} $u \in H^2(D)$. 
Let $f_1 = \mathcal{I}S_1(\eta) g, \, g \in L^2(\partial D)$.
Then
\[
\|f_1^h - f_1\|_{L^2(\partial D)} = Ch^{3/2}\|g\|_{L^2(\partial D)}.
\]
\end{theorem}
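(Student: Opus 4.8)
The plan is to derive the $L^2(\partial D)$ error bound from a standard duality (Aubin--Nitsche) argument applied to the Robin problem \eqref{HelmholtzAW}. First I would establish the interior/energy estimate $\|u-u_h\|_{H^1(D)} \le C h \|u\|_{H^2(D)}$. This follows from C\'ea-type reasoning combined with the G{\aa}rding inequality \eqref{Garding}: since $a(\cdot,\cdot)$ satisfies G{\aa}rding's inequality and the continuous problem is well-posed (Lemma~\ref{LeAnH}), a standard perturbation argument (e.g. the Schatz argument) gives quasi-optimality $\|u-u_h\|_{H^1(D)} \le C \inf_{v_h \in V_h}\|u-v_h\|_{H^1(D)}$ for $h$ small enough, and then the approximation properties of the linear Lagrange space $V_h$ give the factor $h\|u\|_{H^2(D)}$. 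One subtlety here is the presence of $p_h g$ rather than $g$ on the right-hand side of \eqref{HADWeak}; since $p_h$ is the $L^2(\partial D)$-projection onto $V_h^B$ and test functions lie in $V_h$, the boundary term $\langle p_h g, v_h\rangle_{\partial D} = \langle g, v_h\rangle_{\partial D}$ for all $v_h \in V_h$, so Galerkin orthogonality $a(u-u_h, v_h) = 0$ holds exactly and this causes no difficulty.

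Next, for the $L^2(\partial D)$ estimate I would set up the adjoint problem: given the error $e := f_1^h - f_1 = (u_h - u)|_{\partial D} \in L^2(\partial D)$, let $z \in H^1(D)$ solve the adjoint Robin problem $a(v, z) = \langle v, e\rangle_{\partial D}$ for all $v \in H^1(D)$. Because the adjoint form also satisfies a G{\aa}rding inequality and the unique-continuation uniqueness argument of Lemma~\ref{LeAnH} applies equally to the adjoint (the imaginary-part sign works the same way), this problem is well-posed, and by elliptic regularity on the Lipschitz domain we get $\|z\|_{H^2(D)} \le C\|e\|_{L^2(\partial D)}$ (this is the place a regularity hypothesis is implicitly used, parallel to the assumed $u \in H^2(D)$). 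Then
\[
\|e\|_{L^2(\partial D)}^2 = a(u_h - u, z) = a(u_h - u, z - z_h)
\]
for any $z_h \in V_h$, using Galerkin orthogonality, whence
\[
\|e\|_{L^2(\partial D)}^2 \le C \|u-u_h\|_{H^1(D)} \, \|z - z_h\|_{H^1(D)} \le C h \|u\|_{H^2(D)} \cdot h \|z\|_{H^2(D)},
\]
and inserting $\|z\|_{H^2(D)} \le C\|e\|_{L^2(\partial D)}$ gives $\|e\|_{L^2(\partial D)} \le C h^2 \|u\|_{H^2(D)}$.

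This naive argument actually overshoots the claimed $h^{3/2}$; the gap to $h^{3/2}$ rather than $h^2$ comes from the fact that the Aubin--Nitsche bound $\|z-z_h\|_{H^1(D)} \le C h \|z\|_{H^2(D)}$ is the right estimate in the interior, but the quantity actually multiplying it through the trace of $u-u_h$ on $\partial D$ cannot be controlled better than by a trace inequality $\|u - u_h\|_{L^2(\partial D)} \le C\|u-u_h\|_{H^1(D)}^{1/2}\|u-u_h\|_{L^2(D)}^{1/2}$, or alternatively the adjoint solution $z$ generated by a boundary datum $e \in L^2(\partial D)$ only lies in $H^{3/2}(D)$ rather than $H^2(D)$ in general on a Lipschitz domain — so the correct version of the duality estimate is
\[
\|e\|_{L^2(\partial D)}^2 \le C \|u-u_h\|_{H^1(D)} \, \|z - z_h\|_{H^1(D)} \le C\, h\|u\|_{H^2(D)} \cdot h^{1/2}\|z\|_{H^{3/2}(D)} \le C h^{3/2}\|u\|_{H^2(D)}\,\|e\|_{L^2(\partial D)},
\]
which yields exactly $\|f_1^h - f_1\|_{L^2(\partial D)} \le C h^{3/2}\|g\|_{L^2(\partial D)}$ after using $\|u\|_{H^2(D)} \le C\|g\|_{L^2(\partial D)}$.

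The main obstacle, and the step I would be most careful about, is the regularity bookkeeping on the Lipschitz domain: pinning down precisely what Sobolev regularity the primal solution $u$ and the adjoint solution $z$ enjoy (the statement already assumes $u \in H^2(D)$, but the adjoint datum lives only in $L^2(\partial D)$, so one must argue $z \in H^{3/2}(D)$ and that the interpolation error in that norm is $O(h^{1/2})$), and checking that the Galerkin orthogonality and the G{\aa}rding-based quasi-optimality genuinely transfer to the adjoint problem with the modified right-hand side $p_h g$. Everything else — C\'ea's lemma, interpolation estimates for linear Lagrange elements, the trace theorem, and the final substitution of the stability bound from Lemma~\ref{LeAnH} — is routine.
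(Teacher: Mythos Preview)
Your argument is correct, but the paper takes a simpler and more direct route. Rather than setting up the adjoint problem with the \emph{boundary} datum $e=(u_h-u)|_{\partial D}$ and then having to track $H^{3/2}(D)$ regularity and fractional interpolation estimates, the paper uses the standard \emph{interior} Aubin--Nitsche argument: the adjoint problem is $a(v,w)=(u-u_h,v)_D$ with data in $L^2(D)$, which gives $w\in H^2(D)$ under the same regularity hypothesis already assumed for $u$. This yields $\|u-u_h\|_{L^2(D)}\le Ch\,\|u-u_h\|_{H^1(D)}$ and hence $\|u-u_h\|_{L^2(D)}\le Ch^2$. The $h^{3/2}$ boundary rate then drops out of the multiplicative trace inequality
\[
\|(u-u_h)|_{\partial D}\|_{L^2(\partial D)}\le C\,\|u-u_h\|_{L^2(D)}^{1/2}\|u-u_h\|_{H^1(D)}^{1/2}\le C\,h\cdot h^{1/2}=Ch^{3/2},
\]
which you mention in passing but do not pursue. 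The advantage of the paper's approach is that it never leaves the $H^2$ regularity setting and needs no fractional Sobolev interpolation estimates; your boundary-duality approach is valid and in some sense more direct in spirit, but it requires the extra regularity bookkeeping you flag as the main obstacle (well-posedness and $H^{3/2}$ regularity of the Robin adjoint with $L^2(\partial D)$ data on a Lipschitz domain, plus the $O(h^{1/2})$ approximation in $H^1$ for $H^{3/2}$ functions). Either way the Galerkin orthogonality with $p_h g$ is handled exactly as you say.
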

\begin{proof} Let $u$ and $u_h$ be the solutions for \eqref{HelmholtzA} and \eqref{HADWeak}, respectively.
The Galerkin orthogonality is
\[
a(u-u_h, v_h) = 0 \quad \text{for all } v_h \in V_h.
\]
Using the boundedness of $a(\cdot, \cdot)$ and the G{\aa}rding's inequality \eqref{Garding}, one has that
\begin{eqnarray} \nonumber
\alpha_0 \|u-u_h\|^2_{H^1(D)} & \le & |a(u-u_h, u-u_h) + K(u-u_h, u-u_h)| \\
\nonumber &=&| a(u-u_h, u-v_h) + K\|u-u_h\|^2_{L^2(D)} | \\
\label{a0uuh} &\le& C \|u-u_h\|_{H^1(D)} \|u-v_h\|_{H^1(D)}  + K\|u-u_h\|^2_{L^2(D)}.
\end{eqnarray}

Let $w$ be the solution to the adjoint problem
\begin{equation}\label{AP}
a(v, w) = ( u-u_h, {v} )_{D} \quad \text{for all } v \in V. 
\end{equation}
Then $w \in H^2(D)$ and, for any $w_h \in V_h$, one has that
\begin{eqnarray*}
(u-u_h, u-u_h)  &=& a(u-u_h, w) \\
		&=& a(u-u_h, w-w_h) \\
		&\le& C\|u-u_h\|_{H^1(D)} \|w-w_h\|_{H^1(D)}\\
		&\le& Ch \|u-u_h\|_{H^1(D)} |w|_{H^2(D)} \\
		&\le& Ch \|u-u_h\|_{H^1(D)} \|u-u_h\|_{L^2(D)},
\end{eqnarray*}
where we have used the regularity of the solution for the adjoint problem \eqref{AP}. Consequently, it holds that
\begin{equation}\label{uuhL2}
\|u-u_h\|_{L^2(D)} \le C h \|u-u_h\|_{H^1(D)}.
\end{equation}

Plugging the above inequality in \eqref{a0uuh}, one has that 
\begin{equation}\label{ErrH1}
\alpha_0 \|u-u_h\|^2_{H^1(D)}  \le C \|u-u_h\|_{H^1(D)} \|u-v_h\|_{H^1(D)}  + CKh^2 \|u-u_h\|^2_{H^1(D)}.
\end{equation}
For $h$ small enough, we obtain
\[
\|u-u_h\|_{H^1(D)} \le C \inf_{v \in V_h} \|u-v_h\|_{H^1(D)} \quad \text{for all } v_h \in V_h
\]
and thus
\begin{equation}\label{uuhH1}
\|u-u_h\|_{H^1(D)} \le C h \|u\|_{H^2(D)}.
\end{equation}
Using the trace theorem (Theorem 1.6.6 of \cite{BrennerScott2008}), \eqref{uuhH1} and \eqref{uuhL2}, one obtains
\begin{eqnarray*}
 \|(u-u_h)|_{\partial D}\|_{L^2(\partial D)} 
	&\le& C \|u-u_h\|_{L^2(D)}^{1/2}  \|u-u_h\|_{H^1(D)}^{1/2} \\
	&\le& Ch^{3/2}\|g\|_{L^2(\partial D)}.
\end{eqnarray*}
Hence
\[
\|f_1^h - f_1\|_{L^2(\partial D)} = Ch^{3/2}\|g\|_{L^2(\partial D)}
\]
and the proof is complete.
\end{proof}

Setting $A=I$ and $n(x)=1$, consider the problem of find $u^0_h \in V_h$ such that
\begin{equation}\label{HADWeakB}
(\nabla u^0_h,\nabla v_h)_\Omega-i\langle u^0_h, v_h\rangle_\Gamma-\eta(u^0_h, v_h)_\Omega=\langle p_h g,v_h\rangle_\Gamma \quad \text{for all } v_h \in V_h.
\end{equation}
Similarly, we can define a solution operator $S_0^h(\eta): L^2(\partial D) \to V_h$ for \eqref{HADWeakB} by
\[
u^0_h = S_0^h g 
\]
and
\begin{equation}\label{d1h}
f_0^h = \mathcal{I}S_0^h(\eta) g := u^0_h|_{\partial D}.
\end{equation}
From Theorem~\ref{f1hf}, one has that
\begin{equation}\label{f0hf}
\|f_0^h - f_0\|_{L^2(\partial D)} = Ch^{3/2}\|g\|_{L^2(\partial D)}, \quad  g \in L^2(\partial D),
\end{equation}
where $f_0 = \mathcal{I}S_0(\eta) g, \, g \in L^2(\partial D)$.

Let $T_h(\eta)$ be an finite element approximation for $T(\eta)$ given by
\[
T_h(\eta) : = I_h(S_1^h(\eta) - S_0^h(\eta)),
\]
where $I_h: V_h \to V_h^B$ is the restriction operator. 

\begin{lemma}\label{pnp}
If $g \in H^1(\partial D)$, the projection $p_n \in \mathcal{L}(L^2(\partial D), V_h^B)$ defined in \eqref{ph} satisfies
\[
\lim_{h \to 0} \|p_h g\|_{L^2(\partial D)} = \|g\|_{L^2(\partial D)}.
\]
\end{lemma}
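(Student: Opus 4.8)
The plan is to combine the best‑approximation property of the $L^2$‑orthogonal projection with a first‑order interpolation estimate on the boundary mesh. First I would recall that $p_h$ is the orthogonal projection of $L^2(\partial D)$ onto the finite‑dimensional subspace $V_h^B$, so that $g-p_h g \perp V_h^B$ and hence, by the Pythagorean identity,
\[
\|p_h g\|_{L^2(\partial D)}^2 = \|g\|_{L^2(\partial D)}^2 - \|g-p_h g\|_{L^2(\partial D)}^2 .
\]
In particular $\|p_h g\|_{L^2(\partial D)} \le \|g\|_{L^2(\partial D)}$ for every $h$, and it suffices to show that $\|g-p_h g\|_{L^2(\partial D)} \to 0$ as $h \to 0$.

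Next, using once more that $p_h g$ realizes the best $L^2(\partial D)$‑approximation of $g$ in $V_h^B$, for any $\chi_h \in V_h^B$ one has $\|g-p_h g\|_{L^2(\partial D)} \le \|g-\chi_h\|_{L^2(\partial D)}$. Since $\partial D$ is a closed Lipschitz (polygonal) curve, the trace partition $\mathcal{T}_h|_{\partial D}$ is a one‑dimensional mesh of size at most $h$, and $V_h^B$ is exactly the space of continuous piecewise‑linear functions on it. Because $H^1(\partial D)\hookrightarrow C(\partial D)$ in dimension one, the nodal interpolant $\Pi_h g \in V_h^B$ is well defined, and the standard one‑dimensional interpolation estimate gives
\[
\|g-\Pi_h g\|_{L^2(\partial D)} \le C h\, |g|_{H^1(\partial D)} .
\]
Choosing $\chi_h = \Pi_h g$ and inserting this bound into the Pythagorean identity yields $\|g\|_{L^2(\partial D)}^2 \ge \|p_h g\|_{L^2(\partial D)}^2 \ge \|g\|_{L^2(\partial D)}^2 - C^2 h^2 |g|_{H^1(\partial D)}^2$; letting $h\to 0$ gives the claim.

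I expect the only delicate point to be the precise identification of $V_h^B$ and the attendant interpolation estimate on $\partial D$: one must check that the restriction to $\partial D$ of the linear Lagrange space $V_h$ is indeed the $P_1$ finite element space on the induced boundary partition, which holds when each side of the polygon $\partial D$ is a union of element edges; if $\partial D$ is curved one replaces the nodal interpolant by a Clément or Scott–Zhang quasi‑interpolant to avoid point evaluations while retaining the $O(h)$ rate. Everything else is routine Hilbert‑space and finite‑element approximation theory. In fact the $H^1(\partial D)$ hypothesis is stronger than needed: since $p_h$ has norm one and $\|g-p_h g\|_{L^2(\partial D)} \le 2\,\mathrm{dist}_{L^2(\partial D)}(g,V_h^B)$, which tends to $0$ for every $g \in L^2(\partial D)$ because $\bigcup_{h} V_h^B$ is dense in $L^2(\partial D)$, one could also conclude by a pure density argument.
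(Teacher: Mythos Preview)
Your argument is correct and follows essentially the same route as the paper: the paper simply quotes the approximation estimate $\inf_{v_h\in V_h^B}\|g-v_h\|_{L^2(\partial D)}\le Ch\,\|g\|_{H^1(\partial D)}$ and concludes $\|p_h g-g\|_{L^2(\partial D)}\to 0$, which gives the claim by continuity of the norm. Your use of the Pythagorean identity and the explicit nodal interpolant just makes these implicit steps visible, and your remarks on curved boundaries and the density argument are valid refinements.
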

\begin{proof} For simplicity, assume that $V_h^B$ is the linear Lagrange element space $V_h^B$. One has that (see Section 3.2 of \cite{SunZhou2016})
\[
\inf_{v_h \in V_h^B} \|g-v_h\|_{L^2(\partial D)} \le C h \|g\|_{H^1(\partial D)}.
\]
Thus 
\[
\|p_h g - g\|_{L^2(\partial D)} \to 0 \quad \text{as } h \to 0.
\]
\end{proof}

\begin{lemma}\label{suphsupO} Let $h_0 >0$ be small enough.
For every compact set $\Omega \subset \mathbb C_0^+$,
\begin{equation}\label{Fhnorm}
\sup_{h < h_0} \sup_{\eta \in \Omega} \|T_h(\eta)\| < \infty.
\end{equation}
\end{lemma}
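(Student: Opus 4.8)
The plan is to argue by contradiction, combining a compactness argument in the mesh parameter $h$ and in the spectral parameter $\eta\in\Omega$ with the uniqueness statement of Lemma~\ref{LeAnH}. Write $a_\eta(u,v):=(A\nabla u,\nabla v)_D-i\langle u,v\rangle_{\partial D}-\eta(nu,v)_D$. First I would record that the G{\aa}rding inequality \eqref{Garding} holds \emph{uniformly} for $\eta$ in the compact (hence bounded) set $\Omega$: using $\xi\cdot\mathrm{Re}(A)\xi\ge\gamma|\xi|^2$, the symmetry of $A$, and the boundedness of $n$, one obtains $\mathrm{Re}\,a_\eta(v,v)+K\|v\|_{L^2(D)}^2\ge\alpha_0\|v\|_{H^1(D)}^2$ for all $v\in H^1(D)$ with $K:=\gamma+\|n\|_{L^\infty(D)}\sup_{\eta\in\Omega}|\eta|$ and $\alpha_0:=\gamma$, both independent of $\eta\in\Omega$ and of $h$. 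Since the same form (restricted to $V_h$) governs $S_1^h(\eta)$ and, in the special case $A=I$, $n=1$, also $S_0^h(\eta)$, and since
\[
\|T_h(\eta)g\|_{L^2(\partial D)}=\|I_h(S_1^h(\eta)g-S_0^h(\eta)g)\|_{L^2(\partial D)}\le C_{\mathrm{tr}}\bigl(\|S_1^h(\eta)g\|_{H^1(D)}+\|S_0^h(\eta)g\|_{H^1(D)}\bigr)
\]
by the trace inequality with $C_{\mathrm{tr}}$ depending only on $D$, it suffices to prove $\|S_1^h(\eta)g\|_{H^1(D)}\le C\|g\|_{L^2(\partial D)}$ uniformly in $\eta\in\Omega$ and $h<h_0$.

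Suppose this uniform bound fails. For each fixed $h$ the finite-dimensional solution operators $S_j^h(\cdot)$ are continuous (indeed holomorphic, by the discrete analogue of Theorem~\ref{SH}) on the compact set $\Omega$, so $\sup_{\eta\in\Omega}\|T_h(\eta)\|<\infty$ for every $h$; since $h$ ranges over a sequence tending to $0$, the failure of \eqref{Fhnorm} then forces sequences $h_m\to 0$, $\eta_m\in\Omega$, and $g_m\in L^2(\partial D)$ with $\|g_m\|_{L^2(\partial D)}=1$ and $\|u_m\|_{H^1(D)}\to\infty$, where $u_m:=S_1^{h_m}(\eta_m)g_m\in V_{h_m}$ (the $S_0$-part being treated identically). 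Put $\tilde u_m:=u_m/\|u_m\|_{H^1(D)}$ and $\tilde g_m:=g_m/\|u_m\|_{H^1(D)}\to 0$ in $L^2(\partial D)$, so $\|\tilde u_m\|_{H^1(D)}=1$ and, by \eqref{HADWeak},
\[
a_{\eta_m}(\tilde u_m,v_h)=\langle p_{h_m}\tilde g_m,v_h\rangle_{\partial D}\qquad\text{for all }v_h\in V_{h_m}.
\]
Passing to subsequences (not relabeled), $\eta_m\to\eta^*\in\Omega\subset\mathbb C_0^+$ and $\tilde u_m\rightharpoonup u^*$ weakly in $H^1(D)$; by Rellich's theorem $\tilde u_m\to u^*$ strongly in $L^2(D)$, and by compactness of the trace map $\tilde u_m|_{\partial D}\to u^*|_{\partial D}$ strongly in $L^2(\partial D)$.

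The next step is the limit passage. Fix $v\in C^\infty(\overline D)$ and let $v_{h_m}\in V_{h_m}$ be its Lagrange interpolant, so $v_{h_m}\to v$ in $H^1(D)$. Testing with $v_{h_m}$: the right-hand side is bounded by $\|p_{h_m}\tilde g_m\|_{L^2(\partial D)}\|v_{h_m}\|_{L^2(\partial D)}\le\|\tilde g_m\|_{L^2(\partial D)}\|v_{h_m}\|_{L^2(\partial D)}\to 0$, as the $L^2$-projection $p_{h_m}$ has norm $\le 1$. On the left, using the symmetry and $L^\infty$-bounds on $A$, $A\nabla\tilde u_m\rightharpoonup A\nabla u^*$ weakly in $L^2(D)$ while $\nabla v_{h_m}\to\nabla v$ strongly, so $(A\nabla\tilde u_m,\nabla v_{h_m})_D\to(A\nabla u^*,\nabla v)_D$; similarly $\langle\tilde u_m,v_{h_m}\rangle_{\partial D}\to\langle u^*,v\rangle_{\partial D}$ and $\eta_m(n\tilde u_m,v_{h_m})_D\to\eta^*(nu^*,v)_D$. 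Hence $a_{\eta^*}(u^*,v)=0$ for all $v\in C^\infty(\overline D)$, and by density for all $v\in H^1(D)$. Since $\eta^*\in\mathbb C_0^+$, the uniqueness part of Lemma~\ref{LeAnH} gives $u^*=0$. Finally, testing the discrete equation with $v_h=\tilde u_m$ and invoking the uniform G{\aa}rding inequality,
\[
\alpha_0=\alpha_0\|\tilde u_m\|_{H^1(D)}^2\le\mathrm{Re}\,a_{\eta_m}(\tilde u_m,\tilde u_m)+K\|\tilde u_m\|_{L^2(D)}^2\le\|\tilde g_m\|_{L^2(\partial D)}\|\tilde u_m\|_{L^2(\partial D)}+K\|\tilde u_m\|_{L^2(D)}^2,
\]
whose right-hand side tends to $\|u^*\|_{L^2(\partial D)}\cdot 0+K\|u^*\|_{L^2(D)}^2=0$, contradicting $\alpha_0>0$. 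The main obstacle I expect is precisely the limit passage with varying discrete spaces $V_{h_m}$: the test functions must be chosen inside $V_{h_m}$ and converging strongly to a dense class, and the weak/strong convergences must be combined correctly (using the structure of $A$ and $n$) to recover the continuous variational identity; once $u^*=0$ is identified, closing the contradiction via G{\aa}rding is routine.
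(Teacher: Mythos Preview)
Your argument is correct and considerably more detailed than the paper's own proof. The paper proceeds directly: for fixed $\eta\in\Omega$ it simply asserts that the discrete solution satisfies $\|u_h\|_{H^1(D)}\le C\|g_h\|_{L^2(\partial D)}$ with $C$ independent of $h$ (appealing implicitly to the argument of Lemma~\ref{LeAnH} transplanted to $V_h$), deduces the same for $f_0^h,f_1^h$, and then invokes compactness of $\Omega$ to get the double supremum. In other words, the paper's proof hides exactly the step you work hardest on---the $h$-uniformity of the stability constant---inside a one-line assertion.

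Your route is a genuine alternative: instead of asserting uniform discrete stability, you derive it by contradiction via weak $H^1$-compactness, Rellich, and a limit passage back to the continuous homogeneous problem, then close with the uniform G{\aa}rding inequality. This is the standard ``Schatz argument'' style and has the advantage of being self-contained; in particular, it makes transparent \emph{why} $h_0$ must be small and why $\Omega\subset\mathbb C_0^+$ matters (you need the limit $\eta^*$ to lie where uniqueness holds). The paper's approach is shorter but relies on the reader accepting that the Fredholm-alternative constant from Lemma~\ref{LeAnH} carries over to $V_h$ uniformly in $h$---which, if spelled out, would require essentially the compactness argument you give. One minor remark: your reduction to $h_m\to 0$ tacitly assumes the mesh parameters form a sequence tending to zero (rather than a continuum in $(0,h_0)$); this is consistent with the paper's setup and with how the result is used in Theorem~\ref{mainThm}.
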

\begin{proof} Fix $\eta \in \Omega$ and assume $h < h_0$.
Let $g_h \in V_h^B$. Write $p_h g$ as $g_h$ in \eqref{HADWeak} and let $u_h$ be the solution of \eqref{HADWeak}.
One has that
\[
\|u_h\|_{H^1(D)} \le C\|g_h\|_{L^2(\partial D)},
\]
where $C$ does not depend on $h$ but does depend on $\eta$.
Thus
\[
\|f_1^h \|_{L^2(\partial D)} \le C\|g_h\|_{L^2(\partial D)}.
\]
Similarly, $\|f_0^h\|_{L^2(\partial D)} \le C\|g_h\|_{L^2(\partial D)}$. Then we have that
\[
\|T_h(\eta)g_h\|_{L^2(\partial D)} \le C \|g_h\|_{L^2(\partial D)}.
\]
Since $\Omega$ is compact, \eqref{Fhnorm} holds.
\end{proof}

\begin{lemma}\label{ThphgphTg} Assume that $g \in H^1(\partial D)$ and, for $\eta \in \Omega$, $T(\eta)g \in H^1(\partial D)$. Then
\begin{equation}\label{h20ThphgphTg}
\lim_{h \to 0} \|T_h(\eta) p_h g - p_h T(\eta) g\|_{L^2(\partial D)} = 0.
\end{equation}
\end{lemma}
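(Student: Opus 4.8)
The plan is to estimate the quantity $\|T_h(\eta) p_h g - p_h T(\eta) g\|_{L^2(\partial D)}$ by inserting the continuous solution operators as an intermediate term and using the error estimates already established in Theorems~\ref{f1hf} and the companion bound \eqref{f0hf}, together with the consistency of the $L^2$-projection $p_h$ from Lemma~\ref{pnp}. Writing $T(\eta)g = \mathcal{I}(S_1(\eta)-S_0(\eta))g = f_1 - f_0$ and $T_h(\eta)p_h g = I_h(S_1^h(\eta)-S_0^h(\eta))p_h g = f_1^h - f_0^h$ (with the data $g$ in the discrete problems understood to be hit by $p_h$), the difference splits as
\begin{equation*}
T_h(\eta)p_h g - p_h T(\eta) g = \bigl[(f_1^h - f_1) - (f_0^h - f_0)\bigr] + \bigl[(f_1 - f_0) - p_h(f_1-f_0)\bigr].
\end{equation*}
The second bracket tends to $0$ in $L^2(\partial D)$ by Lemma~\ref{pnp}, precisely because of the hypothesis $T(\eta)g = f_1 - f_0 \in H^1(\partial D)$. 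For the first bracket, I would invoke Theorem~\ref{f1hf} and \eqref{f0hf} to get $\|f_1^h - f_1\|_{L^2(\partial D)} + \|f_0^h - f_0\|_{L^2(\partial D)} \le C h^{3/2}\|g\|_{L^2(\partial D)} \to 0$.

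One technical point needs care: Theorem~\ref{f1hf} is stated for the discrete problem with right-hand side $p_h g$ replaced — actually \eqref{HADWeak} already uses $p_h g$ — but the theorem's hypothesis is the $H^2(D)$-regularity of the continuous solution $u$ of \eqref{HelmholtzAW} with datum $g$. Since $D$ is only assumed Lipschitz and $A \in L^\infty(D)$, this elliptic regularity is not automatic, so I would either add the standing assumption that $D$ and the coefficients are such that $u \in H^2(D)$ for $L^2(\partial D)$ data (this is already implicit in the statement of Theorem~\ref{f1hf}), or restrict to the regularity actually available; under that assumption the stated $h^{3/2}$ rate holds uniformly for $\eta$ in the fixed compact $\Omega$. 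The small ambiguity that the discrete operators act on $p_h g$ rather than $g$ is harmless, since $p_h g \in V_h^B \subset L^2(\partial D)$ and $\|p_h g\|_{L^2(\partial D)} \le \|g\|_{L^2(\partial D)}$, so the constants are not affected.

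The remaining step is to combine the two pieces: given $\varepsilon > 0$, choose $h$ small enough that $Ch^{3/2}\|g\|_{L^2(\partial D)} < \varepsilon/2$ and that $\|(f_1-f_0) - p_h(f_1-f_0)\|_{L^2(\partial D)} < \varepsilon/2$, the latter being possible by Lemma~\ref{pnp} applied to $f_1 - f_0 \in H^1(\partial D)$. Then a triangle inequality gives $\|T_h(\eta)p_h g - p_h T(\eta)g\|_{L^2(\partial D)} < \varepsilon$, which is \eqref{h20ThphgphTg}.

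The main obstacle is really the first bracket and its dependence on the $H^2$-regularity hypothesis: the clean $h^{3/2}$ estimate hinges on both the primal solution $u$ and the adjoint solution $w$ lying in $H^2(D)$, which in turn constrains the admissible $D$ and $A$; everything else (the projection consistency, the splitting, the uniformity over the compact $\Omega$) is routine once that regularity is in hand. I would flag this dependence explicitly so that the reader understands \eqref{h20ThphgphTg} — and hence verification of hypothesis (A3)(a) for the abstract theory — rests on the same regularity assumption used throughout Section 4.
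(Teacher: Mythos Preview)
Your proposal is correct and follows essentially the same route as the paper: insert and subtract $T(\eta)g$, bound $\|T_h(\eta)p_h g - T(\eta)g\|$ by $\|f_1^h-f_1\|+\|f_0^h-f_0\|$ via Theorem~\ref{f1hf} and \eqref{f0hf}, and control $\|T(\eta)g - p_h T(\eta)g\|$ by Lemma~\ref{pnp} using the hypothesis $T(\eta)g\in H^1(\partial D)$. Your explicit flagging of the $H^2(D)$-regularity dependence is a useful addition that the paper leaves implicit.
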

\begin{proof} Using the triangle inequality, Theorem \ref{f1hf}, and Lemma \ref{pnp},  one has that
\begin{eqnarray*}
&& \|T_h(\eta) p_h g - p_h T(\eta) g\|_{L^2(\partial D)} \\
&=& \|T_h(\eta) p_h g - T(\eta) g - p_h T(\eta) g +T(\eta) g\|_{L^2(\partial D)} \\
&\le& \|T_h(\eta) p_h g - T(\eta) g\|_{L^2(\partial D)} + \|p_h T(\eta) g -T(\eta) g\|_{L^2(\partial D)} \\
&\le& \| f_1^h - f_1\|_{L^2(\partial D)} + \| f_0^h - f_0\|_{L^2(\partial D)} + \|p_h T(\eta) g -T(\eta) g\|_{L^2(\partial D)} \\
&\le& Ch \|g\|_{H^1(\partial D)}.
\end{eqnarray*}
Then \eqref{h20ThphgphTg} follows immediately.
\end{proof}

%

Now we are ready to present the main convergence result. To this end, we make the following assumption.

{\bf Assumption:} There exist two Sobolev spaces $X$ and $Y$, $X \subset L^2(\partial D)$ and $ Y \subset H^{1/2}(\partial D)$, such that
$T: \Omega \to \mathcal{L}(X, Y)$ is a holomorphic Fredholm operator function of index zero.

{\bf Remark:} 
We refer the readers to \cite{CakoniKress2017} (Theorem 3.5 therein) for a similar result using the boundary integral equation
method for the transmission eigenvalue problem of isotropic media. By assuming that $\partial D$ is $C^{2,1}$ and $A=I$ and $n(x) = n_c$, 
the authors show that a boundary integral operator similar to $T(\eta)$ is a Fredholm operator with index zero from 
$X=H^{-3/2}(\partial D)$ to $Y=H^{3/2}(\partial D)$.

\begin{theorem}\label{mainThm}
	Let $\lambda \in \sigma(T)$ and $h$ be small enough. Assume that $\mathcal{N}(T(\lambda)) \subset H^1(\partial D)$. 
	There exist $\lambda_h \in \sigma(T_h)$ such that
	$\lambda_h \to \lambda$ as $h \to 0$.
	 For any sequence $\lambda_h \in \sigma(T_h)$,
	the following estimate holds:
	\begin{equation}\label{lambdaconv}
		|\lambda_h - \lambda| \le C h^{\frac{1}{r_0}},
	\end{equation}
	where $r_0:=\kappa(T, \lambda)$.
\end{theorem}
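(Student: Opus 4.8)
The plan is to verify that the finite element setup $(X_n,Y_n,p_n,q_n,T_n)=(L^2(\partial D),L^2(\partial D),p_h,p_h,T_h(\eta))$ satisfies hypotheses (A1)--(A3) of Theorem~\ref{Thm210}, and then to read off the rate from the abstract estimate $|\lambda_h-\lambda|\le C\epsilon_h^{1/\kappa}$ by bounding $\epsilon_h$ with the $O(h^{3/2})$ estimates of Theorem~\ref{f1hf} and \eqref{f0hf}. So the proof has two logically separate halves: checking the abstract axioms, and estimating $\epsilon_h$.

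First I would address (A1). Here $X_n=Y_n=L^2(\partial D)$ and $p_n=q_n=p_h$ is the $L^2$ projection onto $V_h^B$. Lemma~\ref{pnp} gives $\|p_hg\|_{L^2(\partial D)}\to\|g\|_{L^2(\partial D)}$ for $g\in H^1(\partial D)$; since $H^1(\partial D)$ is dense in $L^2(\partial D)$ and the $p_h$ are uniformly bounded (they are orthogonal projections, norm $\le 1$), a standard $3\varepsilon$ argument extends this to all $g\in L^2(\partial D)$, giving (A1). Axiom (A2) is exactly Lemma~\ref{suphsupO}. For (A3)(a), the pointwise (regular) convergence $\|T_h(\eta)p_hg-p_hT(\eta)g\|_{L^2(\partial D)}\to 0$ is Lemma~\ref{ThphgphTg} for $g$ smooth enough, and again density plus uniform boundedness (of both $T_h(\eta)$ via (A2) and $T(\eta)$) upgrades it to all $g\in L^2(\partial D)$, using also that $\|p_hT(\eta)g-T(\eta)g\|\to 0$ pointwise on $L^2$. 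The one genuinely substantive axiom is (A3)(b), the ``regular convergence'' / collective compactness type condition: given $g_h\in V_h^B$ with $\|g_h\|$ bounded and $\|T_h(\eta)g_h-p_h y\|_{L^2(\partial D)}\to 0$ for some $y\in L^2(\partial D)$, one must extract a subsequence with $\|g_h-p_h g\|\to 0$ for some $g$. The mechanism is that $T_h(\eta)=I_h(S_1^h(\eta)-S_0^h(\eta))$ maps into traces of discrete Helmholtz solutions, which by the $H^1(D)$ bounds (Lemmas \ref{LeAnH}, \ref{suphsupO}) and the trace/compactness of $H^{1/2}(\partial D)\hookrightarrow\hookrightarrow L^2(\partial D)$ live in a compact set; combined with invertibility of $T(\eta)$ on the Assumption's spaces $X\subset L^2(\partial D)$, $Y\subset H^{1/2}(\partial D)$ away from $\sigma(T)$ (and a Fredholm argument at $\eta\in\sigma(T)$), one recovers strong convergence of $g_h$ itself. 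This is the step I expect to be the main obstacle, because it is where the specific structure of $T$ (composition of trace with a solution operator, plus the index-zero Fredholm Assumption) must be exploited, rather than generic finite element estimates.

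With (A1)--(A3) in hand, Theorem~\ref{Thm210} yields the existence of $\lambda_h\in\sigma(T_h)$ with $\lambda_h\to\lambda$, and the estimate $|\lambda_h-\lambda|\le C\epsilon_h^{1/\kappa}$ with $\kappa=\kappa(T,\lambda)=r_0$ and
\[
\epsilon_h=\max_{|\eta-\lambda|\le\delta}\ \max_{v\in G(\lambda)}\ \|T_h(\eta)p_h v - p_h T(\eta) v\|_{L^2(\partial D)}.
\]
The remaining task is to show $\epsilon_h\le Ch$ (or better). For $v\in G(\lambda)$, the chain of inequalities in the proof of Lemma~\ref{ThphgphTg} gives, for each fixed $\eta$ in the compact disc $\{|\eta-\lambda|\le\delta\}$,
\[
\|T_h(\eta)p_h v - p_h T(\eta) v\|_{L^2(\partial D)}\le \|f_1^h-f_1\|_{L^2(\partial D)}+\|f_0^h-f_0\|_{L^2(\partial D)}+\|p_h T(\eta)v - T(\eta)v\|_{L^2(\partial D)}\le C_\eta h^{3/2}\|v\|_{L^2(\partial D)}+Ch\|T(\eta)v\|_{H^1(\partial D)},
\]
where Theorem~\ref{f1hf} and \eqref{f0hf} supply the $h^{3/2}$ terms (these require $u,v\in H^2(D)$, which holds under the domain/coefficient regularity implicitly assumed, and $G(\lambda)\subset H^1(\partial D)$, which follows from the hypothesis $\mathcal N(T(\lambda))\subset H^1(\partial D)$ together with the structure of generalized eigenelements). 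The dependence on $\eta$ and on the finite-dimensional space $G(\lambda)$ is harmless: $G(\lambda)$ is finite-dimensional so one may pass to a basis, and the constants $C_\eta$ depend continuously on $\eta$ over a compact set, hence are uniformly bounded. Therefore $\epsilon_h\le Ch$, and consequently
\[
|\lambda_h-\lambda|\le C\epsilon_h^{1/\kappa}\le C h^{1/r_0},
\]
which is \eqref{lambdaconv}. I would close by remarking that any sequence $\lambda_h\in\sigma(T_h)$ converging to $\lambda$ automatically satisfies this bound, as asserted, since the abstract theorem's estimate applies to every such sequence.
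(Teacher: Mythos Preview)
Your overall strategy is exactly the paper's: verify (A1)--(A3) and invoke Theorem~\ref{Thm210}, then bound $\epsilon_h$ by $Ch$ via the estimate underlying Lemma~\ref{ThphgphTg}. Your treatment of (A1), (A2), (A3)(a), and the $\epsilon_h$ bound matches the paper's (you even supply the density arguments for (A1) and (A3)(a) that the paper leaves implicit).

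The one place you diverge is (A3)(b). You sketch a compactness-of-range mechanism (traces of discrete $H^1(D)$ solutions landing in a compact subset of $L^2(\partial D)$) and then invoke invertibility away from $\sigma(T)$ plus a vague ``Fredholm argument at $\eta\in\sigma(T)$.'' The compactness of the range is a red herring here: you already \emph{have} convergence of $T_h(\eta)g_h$ by hypothesis, so compactness of the image adds nothing; the entire content is in inverting to recover $g_h$. The paper makes this inversion concrete: for $\lambda\in\rho(T)$ it uses that $T_n(\lambda)^{-1}$ exists and is bounded for $n$ large, and for $\lambda\in\sigma(T)$ it passes to the quotient $L^2(\partial D)/G(\lambda)$, where $T(\lambda)$ has a bounded inverse onto its range, sets $v'=T(\lambda)^{-1}y$ there, and then uses finite-dimensionality of $G(\lambda)$ to extract a convergent subsequence of the $G(\lambda)$-components $(v_n-v')|_{G(\lambda)}$. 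Your sketch is not wrong in spirit, but it does not actually carry out this step, and the ``compactness'' language obscures rather than supplies the needed argument. If you replace your (A3)(b) paragraph with the quotient-plus-finite-dimensionality argument, your proof is complete and coincides with the paper's.
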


\begin{proof}
	Let $\{h_n\}$ be a small enough monotonically decreasing sequence of positive numbers and $h_n \to 0$ as $n \to \infty$.
	Then we have a sequence of operators $T_n(\lambda) := T_{h_n}(\lambda)$, finite element spaces $V_n:=V_{h_n}$, 
	$V_n^B:=V_{h_n}^B$, and the projection $p_n := p_{h_n}$. 
	Clearly, we have that 
	\[
		\lim_{n \to \infty} \|p_n g\|_{L^2(\partial D)}=\|g\|_{L^2(\partial D)}.
	\]
	Thus Assumption (A1) in Section 2 is satisfied since $X, Y \subset L^2(\partial D)$, $q_n=p_n$. 
	Assumption (A2) holds due to Lemma~\ref{suphsupO}.
	Assumption (A3)(a) holds due to Lemma~\ref{ThphgphTg}.
	
	Next we verify Assumption (A3)(b). 
	Let $N'\subset \mathbb N$ and $v_{n} \in V_n^B, n \in N'$ be a  subsequence with $\|v_n\|_{L^2(\partial D)}$
	bounded and 
	\begin{equation}\label{TNvNpNy}
	\lim_{n \to \infty} \|T_n(\lambda) v_n - p_n y\|_{L^2(\partial D)} = 0
	\end{equation}
	for some $y \in L^2(\partial D)$.  
	We shall show that there exists
	a subsequence $N'' \subset N'$ and a $v \in L^2(\partial D)$ such that 
	\begin{equation}\label{assumptionA3b}
	\lim_{N'' \ni n \to \infty}\|v_n - p_n v\|_{L^2(\partial D)} = 0.
	\end{equation}
	
	If $\lambda \in \rho(T)$, then $T(\lambda)^{-1}$ exists and is bounded. Let $v =  T(\lambda)^{-1}y$. Due to \eqref{TNvNpNy},
	one has that $T_n(\lambda) v_n \to y$ as $n \to \infty$. For $n$ large enough, $T_n(\lambda)^{-1}$ exists and is bounded.
	Hence $v_n \to T_n(\lambda)^{-1} p_n y$. Together with the fact that $T_n(\lambda)^{-1} p_n y \to v$ (Assumption (A3)(a)), we obtain that
	$v_n \to p_n v$ as $n \to \infty$.

	If $\lambda \in \sigma(T)$, let $G(\lambda)$ denote the associated generalized eigenspace.
	Then consider $T(\lambda): L^2(\partial D) / G(\lambda) \to \mathcal{R}(T) \subset L^2(\partial D)$,
	where $\mathcal{R}(T)$ is the range of $T$. Then $T(\lambda)$ has a bounded inverse 
	from $\mathcal{R}(T)$ to $L^2(\partial D)/G(\lambda)$. 
	Since $y \in \mathcal{R}(T)$, let
	$v' = T(\lambda)^{-1}y \in L^2(\partial D)/G(\lambda)$. Let $\hat{v}_n = (v_n -v')|_{G(\lambda)}$. 
	Since $G(\lambda)$ is finite dimensional, $\hat{v}_n$ has a convergence subsequence,
	denoted by $\hat{v}_{n'}$. Then $v = \lim_{n' \to \infty} \hat{v}_{n'} + v'$ satisfies \eqref{assumptionA3b}.
	
	The quantity $\epsilon_h$ is the consistency error defined by
	\[
		\epsilon_h = \max_{|\lambda-\lambda_0| \le \delta}\max_{g \in G(\lambda)} \| T_h(\lambda)p_h g - p_h T(\lambda) g\|_{L^2(\partial D)}.
	\]
	where  $\delta > 0$ is chosen sufficiently small.
	From the proof of Lemma \ref{ThphgphTg}, one clearly has that
	\[
		\epsilon_h \le C h
	\]
	and \eqref{lambdaconv} follow Theorem~\ref{Thm210} directly.
\end{proof}

\begin{corollary}
For a simple eigenvalue $\lambda$, there exist $\lambda_h \in \sigma(T_h)$ such that
\[
|\lambda_h - \lambda| \le C h.
\]
\end{corollary}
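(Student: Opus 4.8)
The plan is to derive the corollary as the special case of Theorem~\ref{mainThm} in which the eigenvalue $\lambda$ is simple. First I would recall that, by Theorem~\ref{mainThm}, there exist $\lambda_h \in \sigma(T_h)$ with $\lambda_h \to \lambda$ as $h \to 0$, and the rate estimate $|\lambda_h - \lambda| \le C h^{1/r_0}$ holds with $r_0 = \kappa(T,\lambda)$, the maximal length of a Jordan chain at $\lambda$. So the entire argument reduces to showing that a simple eigenvalue forces $r_0 = 1$, together with checking that the hypothesis $\mathcal{N}(T(\lambda)) \subset H^1(\partial D)$ of Theorem~\ref{mainThm} is satisfied in the simple case.

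Next I would spell out what ``simple'' means in this holomorphic-operator-function setting: following the terminology fixed in Section~2, simplicity should mean that the algebraic multiplicity $m(\lambda) = 1$. Since $m(\lambda) = \sum_{i=1}^J m_i(T,\lambda)$ with each $m_i(T,\lambda) \ge 1$ and $J = \dim \mathcal{N}(T(\lambda)) \ge 1$, the equation $m(\lambda) = 1$ forces $J = 1$ (geometric multiplicity one) and $m_1(T,\lambda) = 1$. In particular $\kappa(T,\lambda) = \max_i m_i(T,\lambda) = 1$, so $r_0 = 1$ and the exponent in \eqref{lambdaconv} becomes $1$, giving $|\lambda_h - \lambda| \le C h$. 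For the regularity hypothesis: since $\dim \mathcal{N}(T(\lambda)) = 1$, the eigenspace is spanned by a single eigenelement $g$, and by the reformulation theorem this $g$ is the Robin data ${\partial}_A u - i u$ on $\partial D$ of a transmission eigenpair; standard elliptic regularity for \eqref{ATE} on the (say, sufficiently smooth) boundary gives $g \in H^1(\partial D)$, so $\mathcal{N}(T(\lambda)) \subset H^1(\partial D)$ holds and Theorem~\ref{mainThm} applies verbatim.

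Finally I would assemble the pieces: apply Theorem~\ref{mainThm} to get the existence of $\lambda_h \in \sigma(T_h)$ converging to $\lambda$ and the bound $|\lambda_h - \lambda| \le C h^{1/r_0}$; substitute $r_0 = \kappa(T,\lambda) = 1$; and conclude $|\lambda_h - \lambda| \le C h$. The main obstacle, such as it is, is not analytic but definitional: one must be careful that the word ``simple'' in the corollary is interpreted as algebraic simplicity (equivalently, geometric multiplicity one \emph{and} no generalized eigenelements of order $\ge 2$), since only that reading forces $\kappa(T,\lambda) = 1$; if ``simple'' were read as merely geometric multiplicity one, a nontrivial Jordan chain could still occur and the rate would degrade. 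A secondary point worth a sentence is confirming the $H^1(\partial D)$ regularity of the one-dimensional eigenspace, which is where the smoothness assumptions on $\partial D$, $A$, and $n$ implicit in the surrounding analysis enter.
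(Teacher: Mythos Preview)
Your proposal is correct and matches the paper's approach: the corollary is stated without proof in the paper, as an immediate consequence of Theorem~\ref{mainThm} upon observing that a simple eigenvalue has $\kappa(T,\lambda)=1$, hence $r_0=1$ in \eqref{lambdaconv}. Your additional remarks on the meaning of ``simple'' (algebraic multiplicity one) and on inheriting the regularity hypothesis $\mathcal{N}(T(\lambda))\subset H^1(\partial D)$ are appropriate clarifications that the paper leaves implicit.
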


\section{Spectral Indicator Method}
To compute the eigenvalues of $T_h$ in a bounded simply connected region $\Omega \subset \mathbb C$, we propose a new algorithm based on spectral projection.
It is an extension of the spectral indicator method proposed in \cite{HuangEtal2016JCP, HuangEtal2017} 
to compute the generalized eigenvalues of non-Hermitian matrices.

Without loss of generality,  let $\Omega \subset \mathbb C$ be a square and 
$\Gamma$ be the circle circumscribing $\Omega$ (see Fig.~\ref{discs}).
Assume that $T_h(\eta)^{-1}$ exists and thus is bounded for all $\eta \in \Gamma$.
Define an operator $P: V_h^B \to V_h^B$ by
\[
P=\dfrac{1}{2\pi i}\int_{\Gamma}T_h(\eta)^{-1}d\eta.
\]
Let $v_h$ be an arbitrary (random) function in $V_h^B$.
If $T_h(\eta)$ has no eigenvalues in $\Omega$, $Pv_h = 0$ for any $v_h \in V_h^B$.
On the other hand, if $T_h(\eta)$ has eigenvalues in $\Omega$, $Pv_h \ne 0$ almost surely.
This is the basic idea behind the spectral indicator method.
In this section, we develop a variation of the spectral indicator method to compute the eigenvalues of $T_h$ in $\Omega$.

Assume that the Lagrange basis functions for $V_h$ is given by 
\[
\phi_i, i=1, \ldots, N_B, N_B+1, \ldots, N
\]
and $\phi_i|_{\partial D}, i=1, \ldots, N_B,$ are the basis functions for $V_h^B$.
Let $A^1_h$, $M_h^B$, $M_h^n$ be the matrices corresponding to the terms
\[
(A\nabla u_h,\nabla v_h)_D, \quad \langle u_h, v_h\rangle_{\partial D}, \quad (nu_h, v_h)_D
\]
in \eqref{HADWeak}, respectively. Let $A^0_h$ and $M_h$ be the matrices corresponding to
$(\nabla u_h,\nabla v_h)_D$ and $(u_h, v_h)_D$ in \eqref{HADWeakB}, respectively.

For $\eta \in \Gamma$, define 
\[
R^1_h = (A^1_h - i M_h^B - \eta M^n_h)^{-1} M_B
\] 
and 
\[
R^0_h = (A^0_h - i M_h^B - \eta M_h)^{-1} M_B, 
\]
where $M_B$ is the matrix such that $(M_B)_{i,j} = (v_j, v_i), v_j \in V_h^B, v_i \in V_h$. 
Thus $M_B: V_h \to V_h^B$ is an $N \times N_B$ projection matrix.
Denote by $M_B^t$ be the transpose of $M_B$.
Then the matrix version of the operator eigenvalue problem is to find $\eta$ and a nontrivial $g_h \in V_h^B$ such that
\begin{equation}\label{ThetaM}
T_h(\eta) g_h := M_B^t(R^1_h - R^0_h)M_B g_h = 0.
\end{equation}

Let $f_h \in V_h^B$ be a random function and $x_h$ be the solution of 
$T_h(\eta) x_h(\eta) = {f}_h$.
Using the trapezoidal rule to approximate the integral
\[
Pf_h = \frac{1}{2\pi i} \int_\Gamma x_h(\eta) d \eta,
\]
we define an indicator for $\Omega$ as
\[
I_\Omega := \left| \frac{1}{2\pi i} \sum_{j=1}^{n_0} \omega_j x_h(\eta_j) \right|,
\]
where $n_0$ is the number of quadrature points and $\omega_j$'s are the weights. 
The indictor $I_\Omega$ is used to test if $\Omega$ contains eigenvalue(s) or not.
If $I_\Omega > 0$, there are eigenvalues in $\Omega$. Then $\Omega$ is (uniformly) subdivided into smaller squares. 
The indicators of these regions are computed. The procedure continues until the size of the squares is smaller than a specified precision $\epsilon_0$, 
say, $10^{-6}$. Then the centers of the squares are the approximations of the eigenvalues of $T_h$ (see Fig.~\ref{discs}). 

\begin{figure}
\begin{center}
{ \scalebox{0.5} {\includegraphics{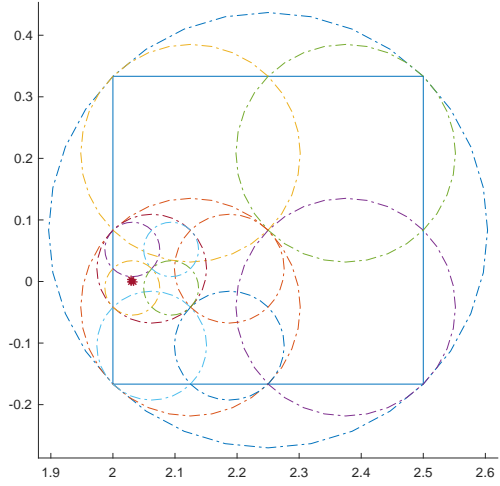}}}
\caption{Several levels of discs on $\mathbb C$ the algorithm SIM-H checked. '*' denotes the eigenvalue.}
 \label{discs}
\end{center}
\end{figure}


The following algorithm SIM-H (spectral indicator method for holomorphic functions)  approximates the eigenvalues of $T$ in $\Omega$

\begin{itemize}
\item[] {\bf SIM-H:}
\item[-] Give a domain $D$.
\item[-] Given a square $\Omega$, the precision $\epsilon_0$, the indicator threshold $\delta_0$.
\item[1.] Generate a triangular mesh for $D$ and the matrices $A^1_h, M_h^B, M_h^n, A^0_h, M_h$.
\item[2.] Choose a random $f \in V_h^B$.
\item[3.] While the length of the square $d > \epsilon_0$, do
	\begin{itemize}
	\item For each square $\Omega_i$ at current level, evaluate the indicator $I_{\Omega_i}$:
		\[
			I_{\Omega_i} := \frac{1}{2\pi i} \sum_{j=1}^{n_0} \omega_j f'_h \left(M_B^t(R^1_h - R^0_h)M_B\right)^{-1} f_h.
		\]
	\item If $| I_{\Omega_i}| < \delta_0$, uniformly divide $\Omega_i$ into smaller squares.
	\end{itemize}
\item[4.] Output the eigenvalues (centers of the small squares).
\end{itemize}
\vskip 0.2cm

The above algorithm computes the eigenvalues up to a given precision $\epsilon_0$.
If the multiplicity of an eigenvalue $\lambda$ is further needed, one can let $\Gamma = \{ z : |z-\lambda|=\epsilon_0\}$,
i.e., the circle centered at $\lambda$ with radius $\epsilon_0$.  
Assume that there are $\kappa$ eigenvalues, counting multiplicity, $\lambda_1, \lambda_2, \ldots, \lambda_\kappa$ inside $\Gamma$.
Choose $m > \kappa $ linearly independent functions ${w}_h^j \in V_h^B, j=1, \ldots, m$.
Let $x_j(\eta) \in V_h^B$ solve
\[
T_h(\eta) x_j(\eta) = {w}_h^j, \quad \eta \in \Gamma, \quad j = 1, \ldots, m.
\]
Let $M_0$ be the $m \times m$-matrix valued function given by
\[
M_0 = V^T V, \quad V=[x_1(\eta), x_2(\eta), \ldots, x_m(\eta)].
\]
Then one has that (see, e.g., \cite{Beyn2014})
\begin{equation}\label{kappa}
\kappa = \text{rank}(M_0).
\end{equation}

Hence one can pick up $m > \kappa$ basis functions in $V_h^B$ and
evaluate $M_0$. Then the multiplicity is the number of significant singular values of $M_0$. Since the eigenvalues are already isolated up to 
the precision $\epsilon$, $m$ can be a small integer, say $3$. If it is not enough, i.e., $\kappa = m$, increase $m$ until
$\text{rank}(M_0) < m$.

It is also possible to compute the eigenfunctions if the eigenvalues are known. We refer the readers
to \cite{Karma1996a, Karma1996b, Beyn2014} for more information.

%

\section{Numerical Examples}
We present some numerical examples to show the effectiveness of the proposed method.
Consider two domains in $\mathbb R^2$, a disc defined by
\[
D_1=\left\{ (x,y)  \in \mathbb R^2 \, | \, x^2+y^2 < \frac{1}{4} \right\}
\]
and the square defined by
\[
D_2=\left\{ (x,y) \in \mathbb R^2 \, | \, |x|+|y| < 1 \right\}.
\]
For all examples, we set the precision $\epsilon_0 = 10^{-6}$ in SIM-H.

\subsection{Example 1}
Let  $A=I$, $n=16$. We generate a triangular mesh with $h\approx 1/40$ and
use the linear Lagrange element for discretization. For $D_1$, the region in which we compute the eigenvalues is 
\[
\Omega:=\{a+ib \,|\, 4.2<a<5.4, -0.6 < b < 0.6\}.
\]
For $D_2$, we set 
\[
\Omega:=\{a+ib \,|\, 2.8<a<3.8, -0.5 < b < 0.5\}.
\] 
In Fig.~\ref{distribution}, we show the computed eigenvalues. These eigenvalues are consistent with the results in literature, e.g., \cite{ColtonMonkSun2010}.
\begin{figure}
\begin{center}
\begin{tabular}{cc}
\resizebox{0.52\textwidth}{!}{\includegraphics{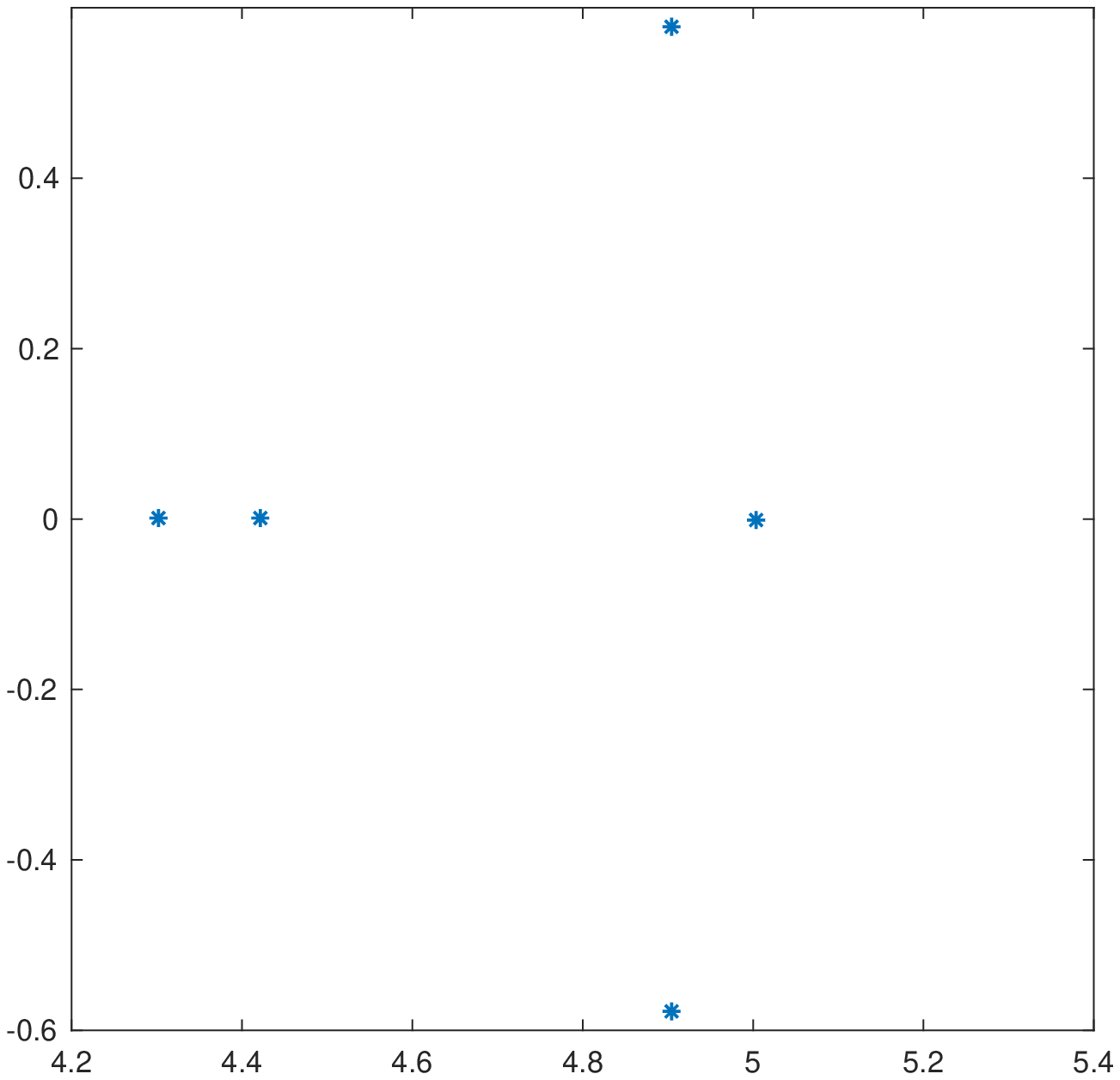}}&
\resizebox{0.52\textwidth}{!}{\includegraphics{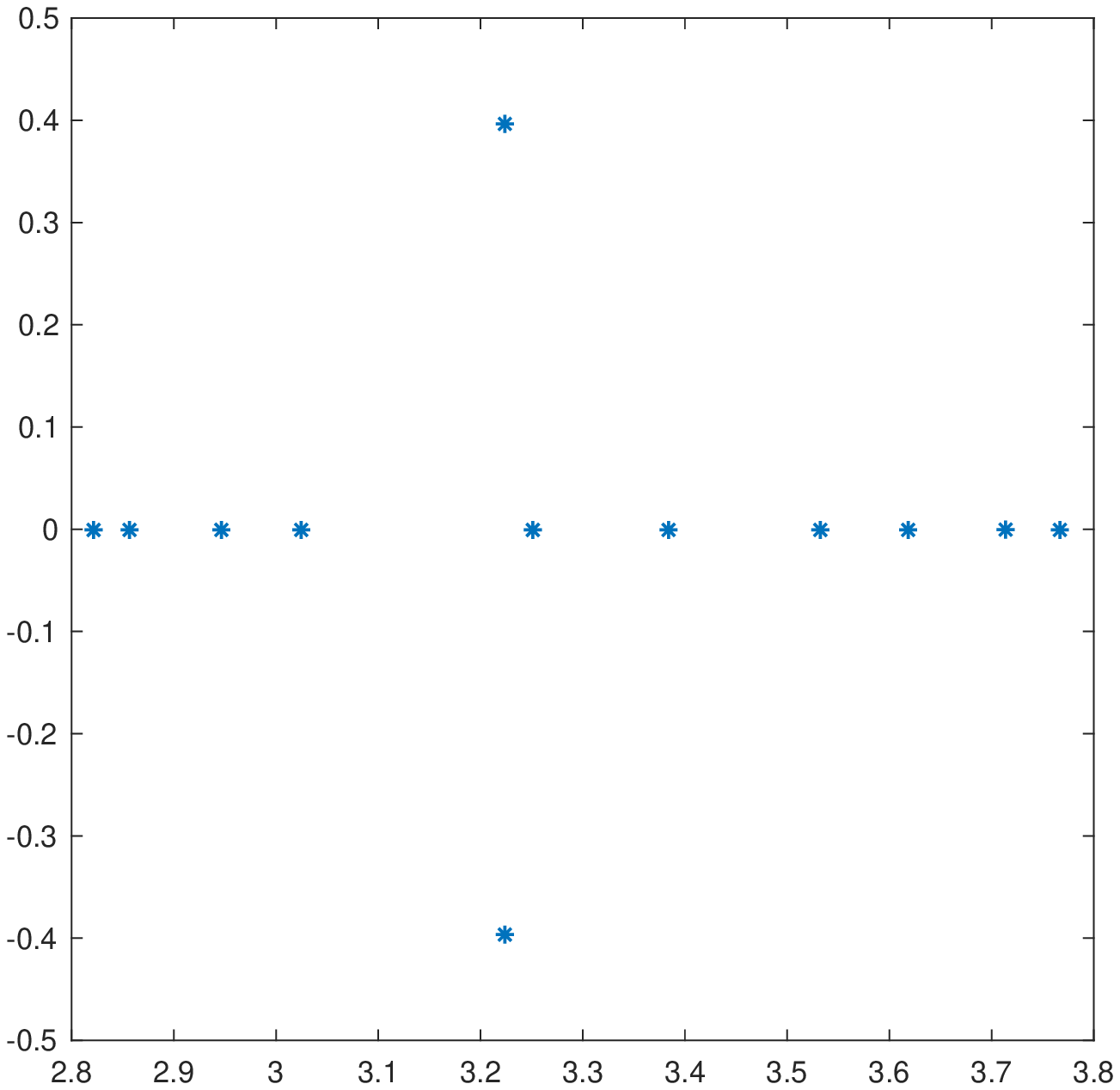}}
\end{tabular}
\end{center}
\caption{Distributions of transmission eigenvalues when $A=I$ and $n=16$. 
Left: $D=D_1$ and $\Omega:=\{a+ib \,|\, 4.2<a<5.4, -0.6 < b < 0.6\}$.  
Right: $D=D_2$ and $\Omega:=\{a+ib \,|\, 2.8<a<3.8, -0.5 < b < 0.5\}$.}
\label{distribution}
\end{figure}

Denote by $\lambda_1^R$ the smallest positive transmission eigenvalue and by $\lambda_1^C$ the complex transmission
eigenvalue with smallest norm and positive real part. In Table~\ref{convergenceC}, we show the computed eigenvalues on
four uniformly refined meshes with the mesh size $h_1 \approx 1/10$ for the coarsest mesh. 
Denote the sequence of the computed eigenvalues $\lambda_{h_i}, i=1,2,3, 4$, which approximate an eigenvalue $\lambda$.
Define the relative error 
\begin{equation}\label{relerr}
	err_i = \frac{|\lambda_{h_i} - \lambda_{h_{i+1}}| }{|\lambda_{h_i}|}, \quad i=1, 2, 3.
\end{equation}
We show the convergence orders in Table~\ref{convergenceC} for $D_1$ and in Table~\ref{convergence} for $D_2$.
Second order convergence is obtained and
the eigenvalues are consistent with the values in \cite{ColtonMonkSun2010, Sun2011SIAMNA}.

\begin{table}[h!]
\caption{Computed eigenvalues for $D_1$, relative errors, and convergence orders.}
\begin{center}
\begin{tabular}{lllllll}
\hline
$h$ & $\lambda_1^R$& $err$ & order &$\lambda_1^R$ &$err$&order\\
\hline
$\frac{1}{10}$ &2.030145&-&-&5.066611 + 0.487817i&-\\
$\frac{1}{20}$&1.998634&0.015521 &-&4.948646 + 0.574916i&0.028808&-\\
$\frac{1}{40}$&1.990651&0.003994&1.96&4.912988 + 0.578294i&0.007189 &3.17\\
$\frac{1}{80}$ &1.988659& 0.001001 &2.00&4.903908 + 0.578200i&0.001836&2.01\\
\hline
\end{tabular}
\end{center}
\label{convergenceC}
\end{table}


\begin{table}[h!]
\caption{Computed eigenvalues for $D_2$, relative errors, and convergence orders.}
\begin{center}
\begin{tabular}{lllllll}
\hline
$h$ & $\lambda_1^R$& $err$ & order &$\lambda_1^C$ &$err$&order\\
\hline
$\frac{1}{10}$&1.367587&-&-&3.020708 + 0.214643i&-\\
$\frac{1}{20}$&1.338741&0.021092 &-&3.253545 + 0.380979i&0.094491&-\\
$\frac{1}{40}$&1.331498&0.005410&1.96&3.223205 + 0.397238i&0.010508&2.00\\
$\frac{1}{80}$&1.329679&0.001366&1.99&3.214951 + 0.399112i&0.002606&1.97\\
\hline
\end{tabular}
\end{center}
\label{convergence}
\end{table}

%

%

\subsection{Example 2}
We compute transmission eigenvalues for anisotropic media. Let $n=1$ and set 
\[
(\text{I})\, A_1 = \left(\begin{array}{cc} 1/2&0\\ 0&1/8 \end{array}\right), 
\quad (\text{II})\, A_2=\left(\begin{array}{cc} \frac{x^2+y^2}{2}&0\\ 0&\frac{2-x^2-y^2}{8} \end{array}\right).
\]
The mesh size is $h \approx 1/40$.
We list the computed eigenvalues in Table~\ref{ComEigA}. Note that $\lambda_1^R(D_1)$ is consistent with
the value in Section 5.1 of \cite{JiSun2013JCP}.

\begin{table}[h!]
\caption{Computed eigenvalues with mesh size $h\approx 1/40$.}
\begin{center}
\begin{tabular}{lllll}
\hline
& $\lambda_1^R(D_1)$& $\lambda_1^C(D_1)$ & $\lambda_1^R(D_2)$ &$\lambda_1^C(D_2)$ \\
\hline
$A_1$&4.880159&3.999540 + 1.569577i&3.334459&2.420045 + 1.362148i\\
$A_2$&2.359596&3.689864 + 1.416741i &1.976699&2.397842 + 0.922280i\\
\hline
\end{tabular}
\end{center}
\label{ComEigA}
\end{table}

%
%
%
%
%
%

\section{Conclusions and Future Work}
In this paper, we develop a finite element method for the nonlinear transmission eigenvalue problem.
Using the approximation theory for eigenvalues of holomorphic operator functions, we prove the convergence. 
A new spectral indicator method is designed to compute the eigenvalues. Numerical
examples show the effectiveness of the proposed method.

The convergence order proved in Theorem~\ref{mainThm} seems to be suboptimal. 
The theory provides a lower bound for the convergence order.
The algorithm SIM-H needs to solve many source problems and is computationally expensive.
In future, we plan to develop a parallel version of SIM-H.

The framework using the approximation theory for eigenvalues of holomorphic operator functions 
can be used to prove the convergence of finite element methods for a large class of nonlinear eigenvalue
problems of partial differential equations. For example, 
the method can be extended to compute the nonlinear transmission eigenvalue problem of the Maxwell's equations
for anisotropic media. Another example is the scattering resonances for frequency dependent material properties.
These problems are currently under our investigation.


\end{document}